	\def\eps{\varepsilon}
	\def\C{\mathbb{C}}
	\newcommand{\R}{\mathbb{R}}
	\def\Z{\mathbb{Z}}
	\def\l{\lambda}
	\def\t{\theta}
	\def\N{\mathbb{N}}
	\newcommand{\Cont}{\mathrm{Cont}}
	\newcommand{\tlambda}{\widetilde{\lambda}}
	\renewcommand{\Re}{\operatorname{Re}}
	\renewcommand{\Im}{\operatorname{Im}}	
	\def\eit{e^{i\theta}}
	\newtheorem{theorem}{Theorem}
	\newtheorem{lemma}[theorem]{Lemma}
	\newtheorem{claim}[theorem]{Claim}
	\newtheorem*{claim*}{Claim}
	\newtheorem{corollary}[theorem]{Corollary}
	\newtheorem{observation}[theorem]{Observation}
	\theoremstyle{definition}
	\newtheorem{definition}[theorem]{Definition}
	\newtheorem{case}{Case}
	\theoremstyle{remark}
\begin{document}

\begin{frontmatter}[classification=text]


\author[marcus]{Marcus Michelen}
\author[julian]{Julian Sahasrabudhe}

\begin{abstract}
Let $f \in \R[z]$ be a polynomial with real coefficients. We say that $f$ is \emph{eventually non-negative} if $f^m$ has non-negative coefficients 
	for all sufficiently large $m \in \N$. In this short paper, we give a classification of all eventually non-negative polynomials. This generalizes a theorem of De Angelis, and proves a conjecture of Bergweiler, Eremenko and Sokal.
\end{abstract}
\end{frontmatter}

\section{Introduction}

In this short paper we study the following basic problem about iterated convolutions of sequences of real numbers. 
\begin{center}
For what sequences $S  = (c_0,\ldots,c_d)$ are all ``high'' convolutions $S \ast S \ast \cdots \ast S$ non-negative?
\end{center}
This is the same as asking,
\begin{center}
For what polynomials $f \in \R[x]$ does $f^m$ have non-negative coefficients for all large $m$?'
\end{center}
In this note we give a classification of these polynomials by showing that two natural necessary conditions are also sufficient.

Polynomials with non-negative coefficients are of particular interest as they enjoy special properties and appear in many combinatorial, physical and probabilistic settings (see, for example, \cite{scott-sokal,BBL,pemantle-survey,LPRS,yang-lee,lee-yang,clt1,clt2}). Thus classifying powers of polynomials for which $f^m$ has non-negative coefficients can be a useful tool for thinking about the space of polynomials with non-negative coefficients and building such polynomials in non-trivial ways.

With this in mind, it perhaps not a surprise that questions of this type reach back to the work of Poincar\'e \cite{poincare}, who studied polynomials $f$ for which there exists a polynomial $p$ so that $pf$ 
has all positive (as opposed to non-negative) coefficients. He gave a full characterization of such polynomials by showing that $f$ satisfies this condition if and only if $f(r) > 0$ for all $r > 0$.  P\'olya \cite{polya1928positive} later proved a multivariate analogue which, in the univariate case, shows $p$ can be taken to be a sufficiently high power of $1 + z$.  This line of results culminated in a pair of papers by Handelman \cite{handelman-85,handelman} who gave necessary and sufficient conditions on the pair $(p,f)$ so that $p(z)^m f(z)$ has non-negative coefficients for some large $m$, provided that $p$ has non-negative coefficients. 
Interestingly, Handelman \cite{handelman1992polynomials} also proved that if a monic polynomial $f$, with $f(1) >0$, is such that $f^m$ has non-negative coefficients for \emph{some} $m$, then it has non-negative coefficients for \emph{all} sufficiently large $m$.

More recently, De Angelis \cite{deAngelis-positivity} studied polynomials $f$ for which $f^m$ has \emph{strictly positive} coefficients for all large $m$. 
This is a particularly interesting property as raising a polynomial to a large power does not change the distribution of the roots (up to multiplicities). Indeed, it is for this reason that this problem was also studied by Bergweiler, Eremenko and Sokal \cite{bes} and was a key step in their work on the root distributions of polynomials with positive coefficients \cite{bergweiler-eremenko}.

For their results, the authors of both \cite{deAngelis-positivity,bergweiler-eremenko} used an interesting notion of positivity: We say that a non-zero polynomial $f$ is \emph{strongly positive} if 
\begin{equation}\label{eq:strong-positive}  f(|z|) > |f(z)|,\end{equation} for all $z \in \C \setminus \R_{\geq 0}$. To see the connection with powers of $f$, we note that a polynomial $f$ is such that $f^m$ has non-negative coefficients then $f$ is strongly positive, as long as $f \not= z^kg(z^{\ell})$, for $\ell \geq 2$. Indeed, if $f^m$ has all non-negative coefficients, for some $m$, then for all $z \in \C \setminus \R_{\geq 0}$ we have
\[ |f(z)|^m = |f^m(z)| \leq f^m(|z|),\]
by the triangle inequality. Additionally, we see that this inequality is strict due to the fact that $f \not= z^kg(z^{\ell})$, for all $\ell$, and therefore $|f(z)| < f(|z|)$.

De Angelis gave the following classification of \emph{eventually positive} polynomials: that is, polynomials for which 
all of the coefficients $a_i$ of $f^m$ are strictly positive for all $i \in \{0,1, \ldots, m\deg(f)\}$ when $m$ is sufficiently large.

\begin{theorem}[\cite{deAngelis-positivity}] \label{th:deAngelis}
	Let $f(z) = a_0 + a_1 z + \cdots + a_{d} z^d$ be a polynomial with real coefficients.
	Then $f$ is eventually positive if and only if $f$ satisfies $|f(z)| < f(|z|)$ for all $z \in \C \setminus \R_{\geq 0}$ and $a_0, a_d, a_1,a_{d-1} > 0$. 
\end{theorem}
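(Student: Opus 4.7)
The forward direction is essentially contained in the introduction. If $f^m$ has strictly positive coefficients for all large $m$, then examining the four extreme coefficients $[z^0]f^m = a_0^m$, $[z^1]f^m = m a_0^{m-1}a_1$, $[z^{md-1}]f^m = m a_d^{m-1}a_{d-1}$, and $[z^{md}]f^m = a_d^m$ forces $a_0, a_1, a_{d-1}, a_d > 0$. Positivity of $a_1$ rules out $f(z) = g(z^{\ell})$ for any $\ell \geq 2$, so the triangle-inequality observation given just before the theorem upgrades the inequality to the strict bound $|f(z)| < f(|z|)$ for all $z \in \C \setminus \R_{\geq 0}$.

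For the converse, the plan is a saddle-point analysis of the Cauchy integral
\[
c_k \;:=\; [z^k] f^m \;=\; \frac{1}{2\pi r^k} \int_{-\pi}^{\pi} f(re^{i\theta})^m\, e^{-ik\theta}\, d\theta,
\]
valid for any $r > 0$. A short argument shows that strong positivity implies $f(t) > 0$ for every $t > 0$: if $f(t_0) = 0$ with $t_0 > 0$, then $z = t_0 e^{i\varepsilon}$ has $f(|z|) = f(t_0) = 0$ yet $|f(z)| \geq 0$, contradicting \eqref{eq:strong-positive}. Consequently $\alpha(r) := r f'(r)/f(r)$ is a well-defined continuous real function on $(0, \infty)$, with $\alpha(r) \sim (a_1/a_0) r$ as $r \to 0^+$ (using $a_0, a_1 > 0$) and $\alpha(r) \to d$ as $r \to \infty$. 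By the intermediate value theorem, for every $\alpha \in (0, d)$ one may choose $r = r(\alpha) > 0$ with $\alpha(r(\alpha)) = \alpha$.

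For the bulk regime $k/m =: \alpha \in [\varepsilon, d - \varepsilon]$, specialize $r = r(\alpha)$. Strong positivity gives $|f(re^{i\theta})| < f(r)$ for every $\theta \in (-\pi, \pi] \setminus \{0\}$, so the integrand concentrates near $\theta = 0$; the saddle condition $\alpha(r) = k/m$ kills the first-order phase, and a standard Laplace expansion yields
\[
c_k \;=\; \frac{f(r)^m}{r^k \sqrt{2\pi m\, \sigma^2(\alpha)}}\bigl(1 + o(1)\bigr),\qquad \sigma^2(\alpha) \;=\; r\,\alpha'(r),
\]
with $o(1)$ uniform on the compact interval $[\varepsilon, d-\varepsilon]$. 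Once $m$ is large this is manifestly positive. For the boundary regime $k \leq K = K(m)$ small, the direct algebraic expansion of $f^m = (a_0 + a_1 z + z^2 g(z))^m$ via the binomial theorem gives $c_k = \binom{m}{k} a_0^{m-k} a_1^k\bigl(1 + o(1)\bigr)$, which is positive because $a_0, a_1 > 0$; the symmetric expansion in the variable $1/z$, using $a_d, a_{d-1} > 0$, handles $k \geq md - K$.

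The main obstacle is coordinating the two regimes. As $\alpha \to 0^+$ the saddle-point $r(\alpha) \to 0$ and $\sigma^2(\alpha) \to 0$, so the saddle expansion requires $m \sigma^2(\alpha) \to \infty$ (equivalently $k \to \infty$), whereas the algebraic expansion is accurate only while $k$ is sufficiently small compared with $\sqrt{m}$. One must therefore choose $\varepsilon = \varepsilon(m)$ tending to zero at a rate making the intervals $[0, \varepsilon m]$ and $[\varepsilon m, (d-\varepsilon) m]$ overlap and together cover $\{0, 1, \ldots, md\}$. The conceptual input is already in hand: strong positivity guarantees the uniform gap $f(r) - \sup_{|\theta|\geq \delta}|f(re^{i\theta})| > 0$ needed to make the saddle-point remainders uniform in $\alpha$, and the hypotheses on $a_0, a_1, a_{d-1}, a_d$ give the boundary terms.
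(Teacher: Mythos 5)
Your overall plan --- Cauchy's formula, a saddle $r(\alpha)$ solving $rf'(r)/f(r)=\alpha$, a Laplace expansion in the bulk, a binomial expansion at the edges, and a stitching of the two regimes --- is exactly the classical route of De Angelis and of Bergweiler--Eremenko--Sokal, and it is the same method this paper deploys in Section~\ref{sec:proof} for the small-coefficient range. For context: the paper does not reprove Theorem~\ref{th:deAngelis} at all; it cites it, observes that $a_0,a_1,a_{d-1},a_d>0$ implies the positive covering property so that Theorem~\ref{th:deAngelis} becomes a special case of Theorem~\ref{thm:main}, and even within the proof of Theorem~\ref{thm:main} the bulk range $[\delta m,(d-\delta)m]$ is imported from De Angelis as Lemma~\ref{lem:middle-coeffs}. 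Your forward direction is fine.

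The converse has a genuine gap at precisely the point you single out as ``the main obstacle,'' and the resolution you offer does not work as stated. You write that ``strong positivity guarantees the uniform gap $f(r)-\sup_{|\theta|\ge\delta}|f(re^{i\theta})|>0$ needed to make the saddle-point remainders uniform in $\alpha$.'' That gap is uniform only for $r$ in a compact subset of $(0,\infty)$: as $\alpha\to0$ the saddle $r(\alpha)\to0$ and both $f(r)$ and $|f(re^{i\theta})|$ tend to $a_0$, so the gap degenerates exactly in the regime where the two expansions must be matched. What is actually needed is a quantitative rate such as $|f(re^{i\theta})|\le f(r)\bigl(1-c\,r(1-\cos\theta)\bigr)$ for small $r$ --- this is where $a_1>0$ enters analytically, and it is the univariate ancestor of the paper's Lemma~\ref{lem:quant-positive}, whose entire purpose is to recover such a bound from the weaker covering hypothesis --- together with the observation that at the saddle $mr\asymp k$, so the off-axis contribution is of size $e^{-ck}$ and is negligible once $k\ge C\log m$, a range which does overlap with the window $k=o(\sqrt m)$ where your binomial expansion is valid. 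Without this quantitative input the stitching is unjustified. Two smaller points in the bulk: your Gaussian formula presupposes $\sigma^2(\alpha)=r\alpha'(r)>0$, whereas strong positivity directly yields only $r\alpha'(r)\ge0$ (it equals $-\partial_\theta^2\log|f(re^{i\theta})|$ at the maximum $\theta=0$), so strict positivity of the second derivative --- or a treatment of a degenerate saddle --- requires an argument; and the intermediate value theorem gives existence but not uniqueness or monotonicity of $r(\alpha)$, which your uniformity claims implicitly use.
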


In this paper, we classify polynomials for which the coefficients of $f^m$ are \emph{non-negative}, rather than positive. A special case of this classification problem was conjectured to hold by Bergweiler, Eremenko and Sokal \cite{bes}
(see p.\ 11). Moreover, Tan and To \cite{tan-to-2017,tan-to-2019}, who have recently extended Theorem \ref{th:deAngelis} to the multivariate setting, state that
new ideas will be necessary to tackle the case of non-negative coefficients. Let us say that a polynomial $f$ is \emph{eventually non-negative}
if $f^m$ has only non-negative coefficients for all large $m$.

For our classification, we will use the notion of ``strong positivity'' but will additionally need to supply a more sophisticated ``boundary condition'' for the coefficients: for example, we cannot simply assume that $a_1 > 0$, as De Angelis does. To get a feel for what we need, let us consider the simplest \emph{non-example}---when $a_0 = -1$. Obviously, as $m$ grows, the constant term in $f^m$ simply alternates between $1$ and $-1$. This is because there is no interaction with 
any of the other terms and there is no chance that it becomes positive. While this example is trivial, this behavior can occur in a slightly more complicated setting. Indeed, consider the polynomial that starts $f = 1 + z^2  - z^3 + \cdots $. Here, it is not possible 
for the coefficient of $z^3$ to become non-negative, as there is no way for the positive terms $1,z^2$ to ``add up'' and reach $z^3$.
Now, one might hope that such examples could simply be excluded by the strong positivity hypothesis. However, it is not hard to construct examples
that are additionally strongly positive. Indeed, Bergweiler, Eremenko and Sokal observed that for $\eps > 0$ sufficiently small, the family of polynomials $$1 + z^3 + z^4 - \eps z^5 + z^6 + z^7 + z^{10}$$ is strongly positive, but all powers have a negative coefficient of $z^5$,  for this same reason. 

Our ``boundary condition'' simply excludes these ``obvious'' ways of ensuring that $f^m$ has negative coefficients. 

\begin{definition}
Let $f(z) = a_0 + a_1 z + \cdots + a_d z^d$ be a degree $d$ polynomial with real coefficients and $a_0,a_d > 0$. Let $S^+(f) := \{ i : a_i >0\}$ denote the
indices of the positive coefficients and and let $S^{-}(f) := \{ i : a_i <0 \}$ denote the indices of the negative. We say that $f$ has the 
\emph{one-sided positive covering property} if every element in $S^-(f)$ can be written as a sum of elements in $S^+(f)$. That is,
\[ S^+(f) + \cdots + S^+(f) \supseteq S^-(f), \]
where the left-hand-side denotes the $d$-times iterated sumset of $S^+(f)$.
We then define $f$ to have the \emph{positive covering property} if both $f(z)$ and its ``reverse'' $z^df(1/z)$ have the one-sided covering property.
\end{definition}

As a quick check, it is useful to note that De Angelis's ``boundary'' condition, $a_0,a_1,a_d,a_{d-1} >0$, easily implies that $f$ satisfies the 
positive covering property. In this case, we have $0,1 \in S^{+}(f)$ and therefore $\{1,\ldots, d\}$ is a subset of the $d$ fold sum set $S^+(f) + \cdots + S^+(f)$.
With this observation at hand, it will be easy to see that our Theorem~\ref{thm:main} naturally implies De Angelis's Theorem~\ref{th:deAngelis} and thus can be viewed as a generalization.

We now state the main theorem in this paper, which says that the necessary conditions of strong positivity and the positive covering property
are sufficient.

\begin{theorem} \label{thm:main}
Let $f \in \R[x]$ be a non-constant polynomial, then $f$ is eventually non-negative if and only if $f = z^{k}g(z^{\ell})$, where $k, \ell \in \N$ and $g$ is strongly positive and satisfies the positive covering property. 
\end{theorem}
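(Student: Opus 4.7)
The necessity direction is essentially carried out in the introduction: writing $f = z^k g(z^{\ell})$ where $k$ is the smallest support index and $\ell$ is the greatest common divisor of the differences of support indices, non-negativity of the coefficients of $f^m$ transfers to those of $g^m$. Strong positivity of $g$ then follows from the triangle inequality applied to $|g(z)|^m = |g^m(z)|$ as noted in the introduction, and the positive covering property is necessary to avoid the Bergweiler--Eremenko--Sokal-type obstructions described above.

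For sufficiency it suffices to treat $f = g$: strongly positive, satisfying the positive covering property, and with coprime support. I would split the range of indices $0 \le j \le dm$ into a bulk and a boundary regime. In the bulk regime $\delta m \le j \le (d-\delta)m$ for small fixed $\delta > 0$, I would use the standard saddle-point method at the positive real $r_\alpha$ solving $r f'(r)/f(r) = \alpha := j/m$. Strong positivity gives $|f(r_\alpha e^{i\theta})| < f(r_\alpha)$ for $\theta \notin 2\pi\Z$, so Laplace's method applied to
\[
[z^j] f^m = \frac{1}{2\pi} \int_0^{2\pi} \frac{f(r_\alpha e^{i\theta})^m}{r_\alpha^j\, e^{ij\theta}} \, d\theta
\]
yields the strictly positive asymptotic $\sim f(r_\alpha)^m / \bigl(r_\alpha^j \sqrt{2\pi m \sigma^2(r_\alpha)}\bigr)$, uniformly for $\alpha \in [\delta,d-\delta]$. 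This is essentially the approach of De Angelis and Bergweiler--Eremenko--Sokal.

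In the boundary regime I would use the expansion $f = a_0(1 + h)$ with $h(z) = \sum_{i \ge 1}(a_i/a_0) z^i$, giving
\[
[z^j] f^m = a_0^m \sum_{k=0}^{K(j)} \binom{m}{k} [z^j] h^k, \qquad K(j) := \max\{k : [z^j] h^k \ne 0\}.
\]
The key combinatorial lemma I would prove is: \emph{under positive covering, every length-$K(j)$ composition of $j$ into nonzero-support indices uses only indices in $S^+(f) \setminus \{0\}$.} Indeed, if such a composition contained a part $p \in S^-(f)$, then positive covering would write $p = s_1 + \cdots + s_{r'}$ with $s_i \in S^+(f) \setminus \{0\}$ and $r' \ge 2$ (since $p \notin S^+(f)$, the sum has at least two nonzero terms), producing a composition of $j$ of length $K(j) - 1 + r' > K(j)$, contradicting maximality of $K(j)$. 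Hence $[z^j] h^{K(j)}$ is a sum of strictly positive products, is positive, and dominates as $m \to \infty$. The upper boundary is handled symmetrically via the reverse polynomial $z^d f(1/z)$, whose one-sided covering is guaranteed by the full positive covering hypothesis.

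The principal obstacle is to glue the two regimes: for intermediate $j$ (growing with $m$ but not linearly), neither argument applies immediately, since $r_\alpha \to 0$ as $\alpha \to 0$ degenerates the saddle-point expansion and $K(j)$ may grow with $j$ so the direct expansion lacks a single dominant term. I would push the combinatorial boundary analysis up to $j$ of order a suitable power of $m$ by quantifying the dominance (controlling ratios of consecutive terms in the expansion in $k$), and extend the saddle-point analysis down to $\alpha \to 0$ at a rate governed by $m_+ := \min(S^+(f) \setminus \{0\})$, so the two ranges of applicability overlap. I expect this overlap analysis to be the most delicate step of the proof.
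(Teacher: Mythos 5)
Your architecture matches the paper's (combinatorial argument at the extreme coefficients, saddle point for the bulk, De Angelis's lemma for the middle), and your ``maximal-length composition'' lemma is correct: positive covering does force every composition of $j$ of maximal length $K(j)$ into nonzero support indices to use only indices from $S^+(f)\setminus\{0\}$, so $[z^j]h^{K(j)}>0$ and, for each \emph{fixed} $j$, $[z^j]f^m\geq 0$ eventually. But you have explicitly left open the gluing of the two regimes, and that gluing is the actual content of the theorem; as written the proposal has a genuine gap there, in two places.

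First, the combinatorial side. Quantifying the dominance of the top term only gets you so far: comparing a term with a negative part $p$ to the term obtained by splitting $p$ into $r'\geq 2$ positive parts gains a factor of roughly $m^{r'-1}/j^{r'}$ from the binomials against a fixed constant from the coefficients, so the direct argument is confined to $j\leq m^{1-c}$ with $c$ depending on $w(f):=\min_{p\in S^-}\max\{t: p=s_1+\cdots+s_t,\ s_i\in S^+\setminus\{0\}\}$, which can be as small as $2$ --- giving only $j\leq m^{1/4}$, nowhere near $\delta m$. The paper closes this with a self-improvement trick you do not have: it first shows the bottom $O(1)$ coefficients of $f^m$ are non-negative, deduces that $w(f^m)\to\infty$, and then reruns the quantified argument on $f^{m_1}, f^{m_1\pm 1}$ (using that any large exponent is a non-negative combination of $m_1-1,m_1,m_1+1$) to reach $j\leq m^{1-\eps}$ for every $\eps>0$. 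Second, the analytic side. To push the saddle point down to $\alpha=j/m$ as small as $m^{-1/(d+1)}$ (so that it meets $m^{1-\eps}$), strong positivity alone is useless: as $\rho\to 0$ the gap $f(\rho)-|f(\rho e^{i\theta})|$ degenerates, and a priori it could be negative to leading order (this is exactly what the Bergweiler--Eremenko--Sokal example exploits). The paper's key new lemma shows that the positive covering property itself yields the quantitative bound $|f(\rho e^{i\theta})|\leq (1-c\rho^{d})f(\rho)$ for $|\theta|\geq\theta_0$ and small $\rho$ --- the covering hypothesis is used to force positivity of the leading coefficients $a_{t_0},a_{t_*}$ in the expansion of $|f(\rho)|^2-|f(\rho e^{i\theta})|^2$. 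Without some such input your saddle-point estimate cannot control the contour away from $\theta=0$ in the small-$\alpha$ regime. (Separately, your necessity argument for the covering property is an assertion rather than a proof; the paper shows that if covering fails at a minimal index $k$, then $[z^k]g^m=m\,b_0^{m-1}b_k<0$ for all $m$.)
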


To see why these conditions are necessary, we spell out the details a little more carefully in Lemma~\ref{lem:easy-direction} in Section~\ref{sec:proof}.

The proof that these conditions are sufficient has two main steps. In the first step we use the positive covering property alone to show that the first and last $m^{1-\eps}$ coefficients of $f^m$ must be non-negative. This is achieved by a careful counting argument along with a ``self improving'' trick. In the second step, we show that we can lift this proof to show that all $[0,\delta m]$ coefficients in $f^m$ are non-negative. We do this by first using our positive covering property to obtain control $f$ on a certain region close to the origin and then applying a saddle-point method. We then finish the proof of Theorem~\ref{thm:main} by using the strong positivity property \eqref{eq:strong-positive} and a lemma of De Angelis.

\section{Dealing with $n \in [0,m^{1-\eps}]$} \label{sec:small-coefficients}

Our proof of non-negativity of coefficients in the range $[0,m^{1-\eps}]$ for $\eps \in (0,1)$ is purely combinatorial and relies solely on the positive covering hypothesis.  Our goal will be to prove the following lemma.  For a polynomial $f$ we write $[z^k] f$ to denote the coefficient of $z^k$ in $f$. 

\begin{lemma}\label{lem:poly-many}
	Let $f \in \R[z]$ be a polynomial of degree $d \in \N$ that satisfies the positive covering property.  Then for each $\eps \in (0,1)$, there exists an $M = M(\eps)$ so that for all $m \geq M$, $[z^n] f^m \geq 0$ for all $n \in [0, m^{1-\eps}]$. 
\end{lemma}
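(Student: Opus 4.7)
The plan is to expand $[z^n] f^m$ as a signed multinomial sum and use the positive covering property to inject ``bad'' configurations (negative-signed terms) into ``good'' configurations of much larger magnitude. After normalizing $a_0 = 1$, I will write
\[
[z^n] f^m \;=\; \sum_{c} W(c), \qquad W(c) \;:=\; \binom{m}{m_0, \ldots, m_d} \prod_{i \ge 1} a_i^{m_i},
\]
where $c = (m_0, \ldots, m_d) \in \N^{d+1}$ satisfies $\sum m_i = m$ and $\sum i m_i = n$. The sign of $W(c)$ is $(-1)^{\sum_{i \in S^-} m_i}$, and I will call $c$ \emph{good} when this sign is $+1$ and \emph{bad} otherwise. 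Since $n \le m^{1-\eps}$, the total non-zero multiplicity $s = \sum_{i \ge 1} m_i$ is at most $n$, so $m_0 = m - s \ge m/2$ for $m$ large.

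For each bad $c$, let $i^{*}(c)$ be the smallest index in $S^-$ with $m_{i^*} \ge 1$, and fix once and for all (depending only on $i^*$) a covering $i^{*} = j_1 + \cdots + j_\ell$ with $j_t \in S^+$ guaranteed by the positive covering property; note $2 \le \ell \le d$ since $i^{*} \notin S^+$. Define $\Phi(c)$ by decrementing $m_{i^*}$ by $1$, incrementing each $m_{j_t}$ by $1$, and decrementing $m_0$ by $\ell - 1$. Then $\Phi(c)$ is good (the parity of $\sum_{i \in S^-} m_i$ flips), and tracking the canonical choice of $i^{*}$ together with its fixed covering shows $\Phi$ is injective on bad configurations. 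A routine computation, using $m_0 \ge m/2$ and $m_{j_t} + 1 \le n+1$, yields
\[
\frac{|W(\Phi(c))|}{|W(c)|} \;\ge\; C_f \cdot \frac{m^{\ell - 1}}{(n+1)^{\ell}} \;\ge\; C_f \cdot m^{\ell \eps - 1},
\]
where $C_f > 0$ depends only on $f$.

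When $\ell \eps \ge 1$, the ratio above exceeds $1$ for $m$ large, so the pair-wise inequality $|W(c)| \le |W(\Phi(c))|$ together with injectivity of $\Phi$ gives $[z^n] f^m \ge \sum_{\text{unmatched good}} W(c) \ge 0$. Choosing the covering to maximize $\ell$ (e.g.\ $\ell = i^{*}$ via the covering $i^{*} = 1 + \cdots + 1$ whenever $1 \in S^+$) handles every $\eps$ bounded below by some $\eps^{*}(f) > 0$. For the remaining regime of genuinely small $\eps$, where a single application of the covering cannot produce a ratio exceeding $1$, the plan is to invoke the \emph{self-improving trick}: one applies the covering substitution simultaneously to several copies of $i^{*}$ (and, if needed, to different $S^-$ indices) within the same configuration, or iterates the counting argument across nested sub-classes, so as to boost the effective exponent of $m$ in the weight ratio to any size required.

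The principal obstacle is precisely this self-improvement step. The pair-wise argument is quantitatively sharp at the threshold $\ell \eps = 1$, and pushing below it requires a genuinely more delicate analysis: one must control the many-to-one character of the iterated injection, bound the cumulative multiplicity factors (which can approach $n$), and verify the resulting aggregate weight estimate uniformly for all $n \le m^{1-\eps}$ with $\eps \in (0,1)$ arbitrarily small. The rest of the proof---the setup, the definition of $\Phi$, the weight-ratio computation, and the injection check---is essentially routine once the self-improving step is in place.
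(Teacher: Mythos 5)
Your setup and pair-wise injection reproduce the first half of the paper's argument (Lemma~\ref{lem:combheart} and Corollary~\ref{cor:non-zero-small}): the weight-ratio bound $C_f\, m^{\ell\eps-1}$ is essentially the paper's computation, and the threshold you identify is real --- with a covering of $i^*$ into $\ell$ positive parts the argument only closes when $\ell\eps>1$, i.e.\ for $\eps\geq \eps^*(f)>0$ where $\eps^*(f)$ is governed by the quantity the paper calls the weight $w(f)$. (Two small points there: your suggestion to take $\ell=i^*$ via $i^*=1+\cdots+1$ requires $1\in S^+(f)$, which the covering property does not give you --- cf.\ the paper's example $1+z^3+z^4-\eps z^5+\cdots$; and your claim that $\Phi$ is injective is not justified, since $i^*(c)$ cannot be read off from $\Phi(c)$ when $m_{i^*}$ drops to $0$ --- but a preimage bound of $|S^-(f)|\leq d$ suffices and costs only a factor $d$ in the required ratio, exactly as the paper does.)

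The genuine gap is the step you yourself flag as the principal obstacle: extending from $\eps\geq\eps^*(f)$ to all $\eps\in(0,1)$. Your proposed fixes --- applying the substitution to several copies of $i^*$ inside one configuration, or iterating over nested sub-classes --- do not obviously work: the obstruction is not that a single bad configuration contains too few negative parts, but that the weight $w(f)$ of the \emph{polynomial} is a fixed constant, so no single substitution can gain more than $m^{w\eps-1}$. The paper's self-improving trick operates at the level of the polynomial, not the configuration: one first applies the base case to conclude that the first $dw_0$ coefficients of $f^{m_0}$ are non-negative for a fixed large $m_0$, which forces $w(f^{m})\geq w_0$ for all $m \geq m_0$ (any surviving negative index must exceed $dw_0$ and hence decomposes into at least $w_0$ positive parts of the support). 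One then applies the base case to the three polynomials $f^{m_1}, f^{m_1\pm1}$ (each of weight $\geq w_0$, so the base case now reaches exponent $1-3/(2w_0)>1-\eps/2$), and covers an arbitrary large exponent $x$ by writing $x=am_1+b(m_1+1)+c(m_1-1)$ with $a,b,c\geq x/(4m_1)$ and multiplying. Without this (or an equivalent mechanism for making the effective weight grow), your argument proves the lemma only for $\eps$ bounded away from $0$, which is strictly weaker than the statement.
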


To understand the combinatorics at play here, it is useful to look at a basic example first.  Consider the polynomial $f(z) = 1 + z^2 + z^3 - Az^5$ for some $A > 0$, and examine the coefficient of $z^5$ in $f^m$: expanding the polynomial directly shows $[z^5] f^m = \binom{m}{2} - A m$, which is positive for large $m$, even if $A$ is massive.  The idea is that the freedom of choosing more terms of smaller degree---i.e.~$z^2$ and $z^3$ rather than $z^5$ in this example---gives a larger contribution than swapping out some number of small degree terms for a larger degree term, even if it has a large negative coefficient. Our method for proving Lemma \ref{lem:poly-many} will be to push this idea much further. 

Before jumping in, we need an elementary bound on ratios of binomial coefficients.

\begin{lemma}\label{lem:gen-binomial-bound}
	Let $d > 0$ and $\gamma \in (0,1)$.  There exist $A > 0$  and $C_d > 0$ so that for all $a \geq A$ and $b \leq a^{\gamma}$ we have $$\frac{\binom{a}{b}}{\binom{a-i}{b + j}} \leq  C_d a^{-j(1 - \gamma)}$$
	for all $i$ with $|i| \leq d$, $j \in \{1,\ldots,d\}$.
\end{lemma}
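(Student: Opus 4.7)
My plan is to expand the ratio as an explicit product of elementary factors and bound each piece. For $i \geq 0$, unpacking the factorials gives
\[
\frac{\binom{a}{b}}{\binom{a-i}{b+j}} \;=\; \underbrace{\prod_{\ell=0}^{i-1} \frac{a-\ell}{a-b-\ell}}_{(\mathrm{I})} \cdot \underbrace{\prod_{\ell=1}^{j} \frac{b+\ell}{a-i-b-\ell+1}}_{(\mathrm{II})},
\]
corresponding to first lowering the top of the binomial from $a$ to $a-i$ (with bottom fixed at $b$) and then raising the bottom from $b$ to $b+j$. For $i < 0$ we have $\binom{a-i}{b+j} \geq \binom{a}{b+j}$ since the top is larger, so the ratio is dominated by the $i=0$ case, and the same analysis below applies.

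The product $(\mathrm{I})$ is a negligible correction: each factor equals $1 + b/(a-b-\ell)$, and since $b \leq (4d)^{-d} a^{\gamma}$ and $\ell < i \leq d$, for $a$ large each factor is at most $1 + 2b/a$, so $(\mathrm{I}) \leq (1+2b/a)^d = 1+o(1)$. For $(\mathrm{II})$, I would bound each factor by $(b+j)/(a-2d-b)$, giving $(\mathrm{II}) \leq (b+j)^j/(a-2d-b)^j$, whose denominator is $(1-o(1))\,a^j$. For the numerator, the hypothesis $b \leq (4d)^{-d} a^{\gamma}$ together with the elementary estimate $(4d)^{-d} \leq 1/(4d)$ (valid since $(4d)^d \geq 4d$ for $d \geq 1$) gives $b+j \leq a^{\gamma}/(4d) + d \leq a^{\gamma}/(2d)$ once $a \geq (4d^2)^{1/\gamma}$, so that $d \leq a^\gamma/(4d)$. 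Raising to the $j$th power, $(b+j)^j \leq a^{\gamma j}/(2d)^j$.

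Putting the pieces together, the ratio is bounded by
\[
\frac{(1+o(1))}{(2d)^j}\, a^{-j(1-\gamma)},
\]
and the factor $(2d)^j \geq 2$ (for $d,j \geq 1$) swallows the $(1+o(1))$ correction once $a$ is sufficiently large in terms of $d$ and $\gamma$, yielding the desired bound. The main obstacle is just careful bookkeeping: the unusual-looking hypothesis $b \leq (4d)^{-d} a^{\gamma}$ exists precisely to supply enough slack to absorb the multiplicative constants generated by the two products and turn the asymptotic bound into the clean exact bound $a^{-j(1-\gamma)}$.
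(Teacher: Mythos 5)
Your proof is correct and takes essentially the same route as the paper's: both expand the ratio of binomial coefficients into elementary factors, identify $(b+j)^j/a^j \le C\,a^{-j(1-\gamma)}$ as the dominant piece via the hypothesis $b \le (4d)^{-d}a^{\gamma}$, and use the constant $(4d)^{-d}$ to absorb the remaining multiplicative corrections for large $a$. Your handling of the $i<0$ case by monotonicity of $\binom{n}{k}$ in $n$ is a clean way to dispose of what the paper bounds with the factor $2^i$.
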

\begin{proof} Let $(a)_b = a(a-1)\cdots(a - b+1)$ denote the falling factorial. We have 
$$\frac{\binom{a}{b}}{\binom{a-i}{b + j}} = \frac{(a)_b (b + j)!}{b! (a-i)_{b+j}} \leq 2^{i+j} \frac{(b+j)^j}{a^j} \leq 2^{i+j} \cdot 2^j a^{1 - \gamma} \leq 8^d a^{1-\gamma}$$
	for sufficiently large $a$.  Choosing $C_d = 8^d$ completes the proof.
\end{proof}

Each term in the expansion of $f^m$ contributing to $[z^n] f^m$ can be associated to a partition $\lambda = (1^{\lambda_1},2^{\lambda_j},\ldots,d^{\lambda_d})$ of $n$ in which we choose the monomial $a_jz^j$ a total of $\lambda_j$ times.  Then the contribution associated to a given partition $\lambda$ is equal to 
$$\Cont(\lambda) := \binom{m}{\l_1} \binom{m-\l_1}{\l_2} \cdots \binom{m-\l_1 -\cdots - \l_{d-1}}{\l_d} \prod_i a_i^{\l_i}.$$
 We may thus write \begin{equation}\label{eq:new-anm-cont}
	[z^n] f^m = \sum_{\lambda \vdash n} \Cont(\lambda),
	\end{equation}
	where the sum is over all integer partitions of $n$.
	
	In the next lemma, we shall show that for each $n$ (in an appropriate range), we can define a map $M = M_{n,m,f}$,
	\[ M : \{\l : \Cont(\l) < 0 \} \rightarrow \{ \l : \Cont(\l)\geq 0 \}, \]
so that $|\Cont(M(\l))| > d|\Cont(\l)|$ \emph{and} the preimage of each element has size at most $d$.
It shall then follow that $[z^n]f$ is positive.

To define this map, we need to define a simple combinatorial quantity. For a polynomial
$f = \sum_{k} a_kz^k $ we set $S^+(f) := \{ k : a_k > 0 \}$ and $S^-(f) := \{ k : a_k <0\}$, as before. If $f$ has the positive covering property
then for every $k \in S^-(f)$ we may express $k = b_1 + \cdots + b_{t}$, for some $t\in \N$ and $b_1,\ldots,b_t \in S^+(f)\setminus \{0\}$.
Let us define the \emph{weight} of $k$ to be the maximum $t$ for which this holds. That is, 
\[ w_f(k) := \max\{  t \in \N:  k = b_1 + \cdots + b_t \textit{ where } b_i \in S^+(f)\setminus \{0\} \}. \]
We then define 
\[ w(f) = \min_{k \in S^{-}(f)} w_f(k).\]

To define the map $M$, let $f$ be a polynomial with the positive covering property and let $m,n \in \N$.
For each $\l \in  \{\l : \Cont_{n,m}(\l) < 0 \}$, there must be some minimal $j$ for which $a_j <0$ and $\l_j >0$.
By the covering property, we know that there exists a partition $\mu  = (1^{\mu_1},2^{\mu_2}, \ldots, d^{\mu_d} )$ of $j$ 
for which $\mu_i > 0$ implies $a_j >0$ for each $j \in \{0,1,\ldots,d\}$  and $\sum_{i} \mu_i \geq w(f)$. Then define a partition $\tilde{\l}$ of $n$ by 
$\tilde{\l}_j = \l_j -1$ and $\tilde{\l}_i = \l_i+\mu_i$, for $i \not= j$. Finally set $M(\l) := \tilde{\l}$.

We pause to make a simple observation about this map.

\begin{observation}\label{obs:compression} Let $m,n,f, \l,\tilde{\l}, j$ be as above. Then, for each $i \in \{1,\ldots,d\}$, we have
\[ \left| \l_1 + \cdots + \l_i - \left( \tilde{\l}_1 +\cdots + \tilde{\l}_i\right) \right| \leq j. \]
\end{observation}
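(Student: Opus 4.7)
The plan is a direct computation from the definition of $\tilde{\lambda}$, once one notes a small structural fact about $\mu$. By hypothesis $\mu$ is a partition of $j$ whose parts lie in $S^+(f)\setminus\{0\}$, while by the choice of $j$ we have $a_j < 0$; hence no part of $\mu$ can equal $j$, i.e.\ $\mu_j = 0$. Combined with $\mu_k \geq 0$ and $\sum_{k=1}^{d} k\,\mu_k = j$, this yields the crude but crucial bound $\sum_{k=1}^{d} \mu_k \leq j$, which will drive the rest of the argument.

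With $\mu_j = 0$ in hand, the prescription $\tilde{\lambda}_j = \lambda_j - 1$ and $\tilde{\lambda}_k = \lambda_k + \mu_k$ for $k \neq j$ can be written uniformly as $\tilde{\lambda}_k - \lambda_k = \mu_k - \mathbf{1}[k = j]$. Summing over $k \in \{1,\ldots,i\}$ then gives, for each $i \in \{1,\ldots,d\}$,
\[
\sum_{k=1}^{i} \tilde{\lambda}_k \;-\; \sum_{k=1}^{i} \lambda_k \;=\; \sum_{k=1}^{i} \mu_k \;-\; \mathbf{1}[j \leq i].
\]

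To finish I would split on whether $j \leq i$. If $j > i$, the indicator vanishes and the right-hand side is non-negative and bounded above by $\sum_{k=1}^{d} \mu_k \leq j$. If $j \leq i$, the right-hand side lies in the interval $[-1,\,\sum_{k=1}^{d}\mu_k - 1] \subseteq [-1,\,j-1]$, and since $j \geq 1$ its absolute value is once again at most $j$. In both cases the claimed bound follows.

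I don't anticipate any real obstacle here; the only step requiring care is verifying that $\mu_j = 0$, which is precisely where the sign information $a_j < 0$ enters. Beyond that the proof is pure bookkeeping, relying only on the elementary fact that a partition of $j$ with parts of size at least one has at most $j$ parts.
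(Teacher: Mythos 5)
Your proof is correct, and since the paper states Observation~\ref{obs:compression} without proof, your bookkeeping is exactly the argument the authors intend: the key points are that $\mu_j=0$ (because $a_j<0$ forces $j\notin S^+(f)$) and that a partition of $j$ into nonzero parts has at most $j$ parts, so each partial sum of the $\mu_k$, shifted by the single $-1$ at index $j$, stays within $[-1,j]$. No gaps.
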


\begin{lemma}\label{lem:combheart} Let $f \in \R[z]$ be a polynomial of degree $d$ with the positive covering property and let $w := w(f)$ be as defined above. 
For all sufficiently large $m \in \N$ and $n \in  [0, m^{1-3/(2w)} ]$ let $\lambda \vdash n$ be a partition of $n$ for which \[ \Cont(\l) = \Cont_{n,m,f}(\l) < 0\] then 
\[ \Cont(M(\l)) > d |\Cont(\l)|, \]where $M$ is the map defined above.  \end{lemma}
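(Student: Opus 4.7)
The plan is to compare $\Cont(\tilde\lambda)$ and $\Cont(\lambda)$ factor-by-factor in the product expression given above. Writing $w':=\sum_i\mu_i\geq w$, and recalling $\tilde\lambda_j-\lambda_j=-1$ while $\tilde\lambda_i-\lambda_i=\mu_i\geq 0$ for $i\neq j$, the scalar part of the ratio $\Cont(\tilde\lambda)/\Cont(\lambda)$ is
\[ a_j^{-1}\prod_{i\neq j}a_i^{\mu_i}, \]
multiplied by a product of ratios of binomial coefficients. Since $a_j<0$ while every $a_i$ with $\mu_i>0$ is positive, this scalar has sign $-1$ and absolute value a positive constant $C_f$ depending only on $f$; the binomial ratios are positive. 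Hence $\Cont(\tilde\lambda)/\Cont(\lambda)<0$, which combined with $\Cont(\lambda)<0$ already gives $\Cont(\tilde\lambda)>0$.

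For the magnitude I would treat the index $i=j$ and the indices $i\neq j$ with $\mu_i>0$ separately. For each such $i\neq j$, Observation~\ref{obs:compression} implies $\bigl|\sum_{k<i}\lambda_k-\sum_{k<i}\tilde\lambda_k\bigr|\leq j\leq d$, and $\lambda_i\leq n\leq m^{1-3/(2w)}$, so Lemma~\ref{lem:gen-binomial-bound} (applied in reciprocal form) with $\gamma:=1-\tfrac{5}{4w}$ gives
\[ \frac{\binom{m-\sum_{k<i}\tilde\lambda_k}{\lambda_i+\mu_i}}{\binom{m-\sum_{k<i}\lambda_k}{\lambda_i}}\ \geq\ m^{\mu_i(1-\gamma)}\ =\ m^{5\mu_i/(4w)}. \]
The hypothesis $\lambda_i\leq(4d)^{-d}m^\gamma$ holds for $m$ large since $5/(4w)<3/(2w)$. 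For the index $i=j$, the ratio is $\binom{m-S'}{\lambda_j-1}/\binom{m-S}{\lambda_j}$ with $|S-S'|\leq d$ and $\lambda_j\leq n\ll m$; a short direct estimate bounds this below by $\lambda_j/(2m)\geq 1/(2m)$ for $m$ sufficiently large.

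Multiplying all these contributions and using $w'\geq w$ yields
\[ \left|\frac{\Cont(\tilde\lambda)}{\Cont(\lambda)}\right|\ \geq\ \frac{C_f}{2}\,m^{5w'/(4w)-1}\ \geq\ \frac{C_f}{2}\,m^{1/4}, \]
which exceeds $d$ for $m$ sufficiently large, completing the proof. The only real obstacle is the numerology: one must choose $\gamma$ satisfying both $\gamma\geq 1-3/(2w)$ (so that Lemma~\ref{lem:gen-binomial-bound} applies) and $w(1-\gamma)>1$ (so that the final exponent is positive). The hypothesis $n\leq m^{1-3/(2w)}$ is precisely what gives such a $\gamma$ room—any $\gamma\in\bigl(1-\tfrac{3}{2w},\,1-\tfrac{1}{w}\bigr)$ will do, and $\gamma=1-\tfrac{5}{4w}$ is a convenient middle choice. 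Intuitively, the extra $w-1$ indices of freedom supplied by the positive covering property translate into an extra factor of $\sim m^{w-1}$, which the $n\leq m^{1-3/(2w)}$ range comfortably preserves after deflation by the combinatorial factors $\lambda_i!/\tilde\lambda_i!$.
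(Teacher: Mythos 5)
Your proof is correct and follows essentially the same route as the paper's: the same decomposition of the ratio into a bounded scalar factor times binomial ratios, the same appeal to Lemma~\ref{lem:gen-binomial-bound} via Observation~\ref{obs:compression}, and the same use of $\sum_i \mu_i \geq w$ to overwhelm the single factor of order $m$ lost at the index $i=j$. The choice $\gamma = 1 - 5/(4w)$ (yielding a gain of $m^{1/4}$ rather than the paper's $m^{1/2}$) is only a cosmetic difference.
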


\begin{proof}
First observe that $\Cont(\l) < 0$ implies $\Cont(M(\l)) > 0$, by construction. Hence we only need to show that $|\Cont(M(\l))| > d|\Cont(\l)|$. 
For this, we let $j$ and $\tilde{\l}_j$ be as in the definition of $M(\l)$ and we write
\[ \Cont(\l) =\binom{m}{\l_1} \binom{m-\l_1}{\l_2} \cdots \binom{m-\l_1 -\cdots - \l_{d-1}}{\l_d} \prod_i a_i^{\l_i}, 
 \] and similarly for $\Cont(\tilde{\l})$. So using that $\tilde{\l}_i = \l_i + \mu_i$, for $i \not=j$ and $\tilde{\l}_j = \l_j-1$ we have
\begin{equation}\label{eq:new-cont-ratio}
\frac{\Cont(\lambda)}{\Cont(\tlambda)} = a_j\prod_{i=0}^d a_i^{-\mu_i} \prod_{i=1}^d 
\frac{ \binom{m - \l_1 - \dots -\l_{i-1}}{\l_i}}{\binom{m - \tilde{\l}_1 - \dots -\tilde{\l}_{i-1}}{\tilde{\l}_i}}.
\end{equation}
Now note that since $\mu$ is a partition of $j$ each $\mu_i \in [0,d]$ and therefore there is a constant $C = C(f)$ for which  
\[ |a_j|\prod_{i} |a_i|^{-\mu_i}  \leq C(f).\]
We now use that $n \leq m^{1-3/(2w)}$ and therefore $ m - \l_1 -\cdots - \l_{i-1} \geq m/2$, for $m$ large. Moreover, we have that 
$ \left| \l_1 + \cdots + \l_i - \left( \tilde{\l}_1 +\cdots + \tilde{\l}_i\right)\right| \leq j$ by Observation~\ref{obs:compression} and thus we may apply Lemma~\ref{lem:gen-binomial-bound} to obtain
\[ \frac{ \binom{m - \l_1 - \dots -\l_{i-1}}{\l_i}}{\binom{m - \tilde{\l}_1 - \dots -\tilde{\l}_{i-1}}{\l_i + \mu_i} } \leq   C_d (m/2)^{-3\mu_i/(2w)}, 
\] when $i \not= j$ and when $i=j$, we have
\[  \frac{ \binom{m - \l_1 - \dots -\l_{j-1}}{\l_j}}{\binom{m - \tilde{\l}_1 - \dots -\tilde{\l}_{j-1}}{\l_j - 1 } } \leq  m. 
\]So from \eqref{eq:new-cont-ratio} we obtain
\[ \frac{|\Cont(\lambda)|}{|\Cont(\tlambda)|} \leq C_d^d \frac{|a_j|\prod_{i=0}^d |a_i|^{-\mu_i}m}{ m^{(3/(2w))\sum_{ i }\mu_i}} \leq  C_d^d\frac{C(f)}{m^{1/2}} < 1/d, \] if $m$ is sufficiently large compared to $C_d^d \cdot C(f)$. For the penultimate inequality, we have used that $\sum_i \mu_i \geq w = w(f)$, which holds by the definition of $w(f)$.
\end{proof}

\begin{corollary}\label{cor:non-zero-small}
Let $f \in \R[z]$ be a polynomial of degree $d$ with the positive covering property and put $w := w(f)$. Then 
for all $n \in [0,m^{1-3/(2w)}]$ we have \[ [z^n]f^m \geq 0,\] for sufficiently large $m$.
\end{corollary}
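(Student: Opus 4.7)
The plan is to use the decomposition from \eqref{eq:new-anm-cont}, namely
\[ [z^n]f^m = \sum_{\lambda \vdash n, \Cont(\lambda) \geq 0} \Cont(\lambda) \ - \ \sum_{\lambda \vdash n, \Cont(\lambda) < 0} |\Cont(\lambda)|, \]
and then to dominate the negative sum by a portion of the positive sum using the map $M$ from Lemma~\ref{lem:combheart}. Since $M$ already gives $|\Cont(M(\lambda))| > d\,|\Cont(\lambda)|$ in the range $n \in [0, m^{1-3/(2w)}]$, all that remains is a bound on how many negative partitions can share a common image.

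The key observation I would establish is that, after fixing once and for all (as a function of $f$ alone) a canonical partition $\mu = \mu(j,f)$ of each $j \in S^-(f)$ witnessing the positive covering property with $\sum_i \mu_i \geq w(f)$, the map $M$ has preimages of size at most $d$. Indeed, given $\tilde{\lambda} \in \mathrm{Image}(M)$, to recover a preimage $\lambda$ it suffices to know the pivot index $j \in \{1,\ldots,d\}$, because then $\lambda_j = \tilde{\lambda}_j + 1$ and $\lambda_i = \tilde{\lambda}_i - \mu_i(j,f)$ for $i \neq j$. Since $j$ ranges over at most $d$ values, $|M^{-1}(\tilde{\lambda})| \leq d$.

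Combining these ingredients and regrouping the sum by image, for $m$ large and $n \leq m^{1-3/(2w)}$,
\[ \sum_{\Cont(\lambda) < 0} |\Cont(\lambda)| = \sum_{\tilde{\lambda} \in \mathrm{Image}(M)} \sum_{\lambda \in M^{-1}(\tilde{\lambda})} |\Cont(\lambda)| < \sum_{\tilde{\lambda} \in \mathrm{Image}(M)} d \cdot \frac{\Cont(\tilde{\lambda})}{d} \leq \sum_{\Cont(\lambda) \geq 0} \Cont(\lambda), \]
which immediately yields $[z^n]f^m \geq 0$. The main obstacle is just to set the canonical choice of $\mu$ so that inversion of $M$ is unambiguous once $j$ is fixed; everything else is routine bookkeeping on top of Lemma~\ref{lem:combheart}.
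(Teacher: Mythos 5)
Your proposal is correct and follows essentially the same route as the paper: decompose $[z^n]f^m$ over partitions, use the map $M$ and Lemma~\ref{lem:combheart} to dominate each negative contribution by $d^{-1}$ times a positive one, and bound the preimages of $M$ by $d$ via recovering $\lambda$ from $\tilde{\lambda}$ and the pivot index $j$. Your explicit fixing of a canonical $\mu(j,f)$ to make the inversion unambiguous is a detail the paper leaves implicit, but the argument is the same.
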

\begin{proof}
Let $M = M_{f,n,m}$ denote the map defined above. We have that  
\begin{align*}
[z^n]f^m &= \sum_{\l \vdash n}  \Cont(\l) \\
 &\geq \sum_{\Cont{\l} < 0 } \Cont(\l) + d^{-1}\Cont(M(\l)) \\
 & \geq 0, 
\end{align*}
where the penultimate inequality holds due to the fact that each element of $\{ \l : \Cont(\l) \geq 0\}$ is mapped to by at most $d$ distinct partitions.
The last inequality holds by applying Lemma~\ref{lem:combheart}.
\end{proof}

Lemma \ref{lem:poly-many} follows from applying Corollary \ref{cor:non-zero-small} twice:  first we apply it to see that $w(f^m)$ gets arbitrarily large as $m$ gets large.  We then apply Corollary~\ref{cor:non-zero-small} again
to $ f^m, f^{m+1},f^{m-1}$ to prove Lemma~\ref{lem:poly-many}. For this we need a simple observation. 

\begin{observation}\label{obs:2}
Let $x,m \in \N$. If $x \geq 8m^2$ then we may write
\[ x = am + b(m+1) + c(m-1), \]
for integers $a,b,c \geq x/(4m)$.
\end{observation}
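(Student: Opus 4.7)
The plan is a direct construction. First, fix $k := \lceil x/(4m) \rceil$; this automatically secures the lower bound $a,b,c \geq x/(4m)$ provided we end up with $a,b,c \geq k$. Substituting $a = k+\alpha$, $b = k+\beta$, $c = k+\gamma$ reduces the problem to finding nonnegative integers $\alpha,\beta,\gamma$ satisfying
\[
\alpha m + \beta(m+1) + \gamma(m-1) \;=\; x - 3km \;=:\; y.
\]
The hypothesis $x \geq 8m^2$, combined with the upper bound $k \leq x/(4m)+1$, gives $y \geq x/4 - 3m \geq 2m^2 - 3m$, which is at least $m^2$ whenever $m \geq 2$.

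The second step is a small numerical-semigroup observation: the pair $\{m, m+1\}$ alone already generates every sufficiently large nonnegative integer, so the third generator $m-1$ is not actually needed. Concretely, apply the division algorithm to write $y = qm + r$ with $0 \leq r < m$; since $y \geq m^2$ forces $q \geq m > r$, the choice $\alpha := q - r$, $\beta := r$, $\gamma := 0$ is a valid nonnegative representation of $y$. The degenerate case $m = 1$ (in which $m-1 = 0$) can be dispatched by hand: take $b = c = \lceil x/4 \rceil$ and $a = x - 2b$, and note $a \geq x/2 - 2 \geq x/4$ once $x \geq 8$.

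There is no substantive obstacle; this is a routine Frobenius/Chicken McNugget style argument, and the constant $8$ in the hypothesis is comfortable rather than tight. The only step that requires a moment of thought is verifying that $y = x - 3km$ is still large enough (namely $\geq m^2$) to be representable after the rounding in the definition of $k$, which is where the factor $8$ (rather than a smaller constant like $4$) earns its keep.
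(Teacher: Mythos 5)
Your construction is correct in outline and takes a genuinely different, and in fact more careful, route than the paper. The paper writes $x = km+\ell$ with $0 \le \ell < m$, starts from the balanced solution $a_0 \approx b_0 = c_0 \approx k/3$ of $a_0 m + b_0(m+1) + c_0(m-1) = km$, and then absorbs $\ell$ by shifting $(a,b,c) = (a_0-\ell,\, b_0+\ell,\, c_0)$. You instead anchor all three variables at $\lceil x/(4m)\rceil$, which secures the required lower bound by fiat, and then represent the leftover $y$ in the numerical semigroup generated by $m$ and $m+1$. The two arguments are both division-algorithm arguments, but yours cleanly decouples the lower-bound requirement from the exact-value requirement, whereas the paper's shift by $\ell$ (which can be as large as $m-1$) eats into the lower bound on $a$; your version is the more robust of the two as far as the constants are concerned. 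Both are equally elementary; neither buys any generality the other lacks, and any constant in place of $1/4$ would serve the application (Lemma~\ref{lem:poly-many}) equally well.

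There is one arithmetic slip: the claim that $2m^2 - 3m \ge m^2$ for all $m \ge 2$ fails at $m=2$ (it gives $2 < 4$), so your stated threshold $y \ge m^2$ is not actually met there. The fix is immediate: the correct representability threshold for $\{m, m+1\}$ is $m^2 - m$ (the Frobenius number is $m^2-m-1$). Indeed, writing $y = qm + r$ with $0 \le r < m$, the bound $y \ge m^2 - m$ gives $qm \ge (m-1)^2$, hence $q \ge m-1 \ge r$, which is all that $\alpha = q - r \ge 0$ requires. Since $2m^2 - 3m \ge m^2 - m$ exactly when $m \ge 2$, replacing $m^2$ by $m^2 - m$ repairs the argument for every $m \ge 2$; alternatively $m = 2$ is trivial on its own since then $m-1 = 1$ generates everything. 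With that one-line correction the proof is complete.
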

\begin{proof}
We first see that for $x = km$, $k \in \N$, we may find a solution $(a_0,b_0,c_0)$ where $a_0,b_0,c_0 \in \{\lfloor k/3 \rfloor, \lceil k/3 \rceil\}$.
Then, for $x = km + \ell$ we find a solution $(a,b,c)$ by setting $a = a_0 - \ell$ and $b = b_0+\ell$, $c = c_0$. 
\end{proof}

\begin{proof}[Proof of Lemma \ref{lem:poly-many}]
	First choose $w_0\in \N$ large enough so that $1 - 3/(2w_0) > 1-\eps/2$. 
	
	We now apply Corollary~\ref{cor:non-zero-small} to find a $m_0$ so that for all $m \geq m_0$ we have $[z^k]f^{m} \geq 0$ for all $k \in \{0,1,\ldots, dw_0\}$. 
	We claim that $w(f^{m}) \geq w_0$ for all $m \geq m_0$. To see this, let $n$ be such that $[z^n]f^{m} < 0$ and note that we must have $n > dw_0$. 
	Now let $S(f) = \{ k : [z^k]f \not=0 \}$ be the support of $f$ and since $n$ is in the support $S(f^{m})$, 
	we may write $n = b_1 + \cdots + b_{t}$, where $b_1,\ldots,b_t \in S(f)\setminus \{0\}$ and $t \geq w_0$. Since $f$ has the positive covering property 
	we can write $n = b^+_1 + \cdots + b^+_{s}$ for $b_1^+ \ldots, b_s^+ \in S^+(f)\setminus \{0\}$ for $s \geq t \geq w_0$. Therefore $w(f^{m}) \geq w_0$.
	
	Now, choose $m_1 = m_0+1$ and 
	 given $x \geq 8(m_1)^2$ we may apply Observation~\ref{obs:2} to write $x = am_1 + b(m_1+1) + c(m_1-1)$, where $a,b,c \geq x/(4m_1)$ and thus we may write
	\begin{equation} \label{eq:factorf} f^{x} = f^{am_1}f^{b(m_1+1)}f^{c(m_1-1)}\end{equation}

	Now crucially note that each of $f^{m_1},f^{m_1-1},f^{m_1+1}$ has the positive covering property and that $w(f^{m_1}),w(f^{m_1-1}),w(f^{m_1+1}) \geq w_0$.
	Therefore we may apply Lemma~\ref{cor:non-zero-small}
	to each of $ f^{m_1}, f^{m_1+1} f^{m_1-1}$ to learn that for $ q \in \{m_1,m_1-1,m_1+1\}$
	we have $[z^n]f^{p q} \geq 0$ for all $n \in [0,p^{1-2/w_0}] \supseteq [0,p^{1-\eps/2}] $ and sufficiently large $p$. 
	Thus, from \eqref{eq:factorf}, we see that $[z^n]f^x \geq 0$ for all $n \in [0,x_0^{1-\eps/2}]$ where $x_0 = \min\{ a,b,c\} \geq x/(4m_1)$
	and $x$ is sufficiently large. This completes the proof.	
\end{proof}

\section{Dealing with $n \in [0,\delta m]$ and the proof of the Theorem~\ref{thm:main}}\label{sec:proof}

In this section we bolster the main result of the previous section (Lemma~\ref{lem:poly-many}) by showing that we can get positive coefficients for all $n \in [0,\delta m]$, for sufficiently large $m$.

\begin{lemma} \label{lem:first-delta}
	Let $f(z) = 1 + a_1 z + \cdots + a_d z^d$ have the positive covering property and assume that $f\not= g(z^{\ell})$ for $g \in \R[z]$, $\ell \geq 2$.  Then there exists a $\delta > 0$ so that for all sufficiently large $m$ we have
 $[z^n] f^m \geq 0$ for all $n \in [0,\delta m]$.  
\end{lemma}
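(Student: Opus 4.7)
The plan is a Hardy–Littlewood-style saddle-point argument. By Cauchy's formula, for any $r > 0$ with $f(r) > 0$,
\[
[z^n] f^m = \frac{f(r)^m}{2\pi r^n} \int_{-\pi}^{\pi} \left(\frac{f(re^{i\theta})}{f(r)}\right)^m e^{-in\theta}\, d\theta.
\]
For each $n$ I would choose $r = r_n$ to be the saddle-point radius, i.e., the small positive solution of $\phi(r) := rf'(r)/f(r) = n/m$. A key preliminary consequence of the positive covering property and $a_0 = 1$ is that if $k_0 := \min(S^+(f) \setminus \{0\})$, then every element of $S^-(f)$ is a sum of at least two elements of $S^+(f) \setminus \{0\}$, hence is at least $2k_0$; thus $a_k \geq 0$ for all $k < 2k_0$ and $a_{k_0} > 0$. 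Consequently $\phi(r) \sim k_0 a_{k_0} r^{k_0}$ near the origin, so $\phi$ is a strictly increasing bijection from some interval $(0, r_*]$ onto $(0, \phi(r_*)]$; choosing $\delta < \phi(r_*)$ guarantees that $r_n$ is well defined and $r_n \to 0$ as $n/m \to 0$. I would restrict attention to $n \in [m^{1-\eps}, \delta m]$ for some fixed small $\eps > 0$, since for $n < m^{1-\eps}$ Lemma~\ref{lem:poly-many} already yields non-negativity.

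Next, I would split the integral into three ranges. On the Gaussian window $|\theta| \leq m^{-1/2 + \eta}$, Taylor expansion gives
\[
\log \frac{f(re^{i\theta})}{f(r)} = i\theta \phi(r) - \tfrac{1}{2}\sigma^2(r) \theta^2 + O(\theta^3),
\]
where $\sigma^2(r) := r \phi'(r) > 0$. Because $\phi(r_n) = n/m$, the linear phase cancels $e^{-in\theta}$, and the main contribution is the strictly positive Gaussian $\sim f(r)^m/(r^n \sqrt{2\pi m \sigma^2(r)})$. On the intermediate range $m^{-1/2+\eta} \leq |\theta| \leq \theta_2$, for a sufficiently small constant $\theta_2 > 0$, the same Taylor expansion combined with the quadratic decay yields a contribution bounded by $e^{-\Omega(m^{2\eta})}$ times the main term.

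The main obstacle is the outer range $\theta_2 \leq |\theta| \leq \pi$, where Taylor expansion near $\theta = 0$ is no longer sharp. Here I would show that for $r$ sufficiently small (equivalently, $\delta$ small enough), a quantitative gap $|f(re^{i\theta})/f(r)|^2 \leq 1 - c r^{k_0}$ holds uniformly in $\theta$ on this range. Expanding,
\[
f(r)^2 - |f(re^{i\theta})|^2 = 2\sum_{j < k} a_j a_k r^{j+k}(1 - \cos((k-j)\theta)),
\]
the leading term $(j,k) = (0, k_0)$ equals $2 a_{k_0} r^{k_0}(1 - \cos(k_0\theta)) \geq 0$, vanishing only at the finitely many ``phantom maxima'' $\theta = 2\pi j / k_0$ with $j \in \{1, \ldots, k_0 - 1\}$. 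At each phantom maximum, the aperiodicity hypothesis $\gcd(S(f)) = 1$ (equivalent to $f \neq g(z^\ell)$ for $\ell \geq 2$) guarantees the existence of indices whose subleading contributions are non-zero; using the positive-covering argument once more, one shows that these subleading contributions have the correct sign to give a strict positive gap of order $r^{k_0 + O(1)}$ at each phantom maximum. Verifying the sign and uniformity of these finitely many subleading contributions is the most delicate bookkeeping of the proof. Raising to the $m$-th power then yields decay $\exp(-c m r^{k_0}) = \exp(-c' n)$, which is superpolynomially smaller than the main term whenever $n \geq m^{1-\eps}$. Combining the three ranges establishes $[z^n] f^m > 0$ on $[m^{1-\eps}, \delta m]$; together with Lemma~\ref{lem:poly-many} for smaller $n$, this proves the lemma.
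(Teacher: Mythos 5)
Your proposal follows essentially the same route as the paper: a saddle-point choice of radius with $r\phi'$-type control, a three-range split of the Cauchy integral (Gaussian window, intermediate range, outer range), reduction to $n\geq m^{1-\eps}$ via Lemma~\ref{lem:poly-many}, and a quantitative gap $|f(re^{i\theta})|\leq(1-cr^{O(1)})f(r)$ on the outer range derived from the positive covering property. The ``delicate bookkeeping'' you defer at the phantom maxima is exactly the content of the paper's Lemma~\ref{lem:quant-positive} (Case 2 there: splitting pairs $(p,q)$ into those whose phase vanishes at $\phi$ and those whose does not, with positive covering forcing the leading coefficient of each group to be positive), and the mechanism you sketch is the correct one.
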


One we have proved Lemma \ref{lem:first-delta} we essentially be finished, by appealing to the following result of De Angelis \cite{deAngelis-positivity}.

\begin{lemma}[\cite{deAngelis-positivity}]\label{lem:middle-coeffs}
	Let $f(z) = a_0 + \cdots + a_d z^d$ be strongly positive with $a_0, a_d > 0$.  Then for every $\delta > 0$, there exists an $M = M(\delta)$ so that for all $m \geq M$, $[z^n] f(z)^m > 0$ for all $n \in [\delta m, (d - \delta)m]$.
\end{lemma}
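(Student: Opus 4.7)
The plan is to prove Lemma~\ref{lem:middle-coeffs} by a saddle-point analysis of the Cauchy integral
\[ [z^n] f(z)^m = \frac{1}{2\pi r^n} \int_{-\pi}^{\pi} f(re^{i\theta})^m e^{-in\theta}\, d\theta, \]
where $r > 0$ will be chosen depending on $n/m$. First I would set up the geometry. Strong positivity applied to $z = -r$ gives $f(r) > |f(-r)| \geq 0$, so $f$ is strictly positive on $(0,\infty)$. Define $\phi(r) := rf'(r)/f(r)$; a direct computation gives $\phi(0^+) = 0$ and $\phi(\infty) = d$, and $\phi$ is continuous on $(0,\infty)$. By the intermediate value theorem, for every $t \in [\delta, d-\delta]$ there is some $r_t > 0$ with $\phi(r_t) = t$, and by continuity and compactness of $[\delta,d-\delta]$ the set of such $r_t$ can be chosen to lie in a fixed compact interval $[r_-, r_+] \subset (0,\infty)$.

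Next, for $n \in [\delta m,(d-\delta)m]$, write $t = n/m$ and choose $r = r_t$ as above. Then $\theta = 0$ is a saddle point of the integrand in the sense that the first derivative of $\log(f(re^{i\theta})^m e^{-in\theta})$ vanishes there. I would split the circular contour into the inner arc $I := \{|\theta| < m^{-2/5}\}$ and the outer arc $O := \{m^{-2/5} \leq |\theta| \leq \pi\}$ and bound the two contributions separately.

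On $O$, strong positivity states that $|f(re^{i\theta})| < f(r)$ whenever $\theta \neq 0$, and by compactness of $\{r_t : t \in [\delta,d-\delta]\} \times \{\theta : m^{-2/5} \leq |\theta| \leq \pi\}$ together with continuity, we get a uniform estimate $|f(re^{i\theta})| \leq (1 - c\theta^2) f(r)$ for some constant $c = c(f, \delta) > 0$, valid for all $|\theta| \leq \pi$ and $r \in [r_-,r_+]$. Hence
\[ \int_O |f(re^{i\theta})|^m\, d\theta \leq 2\pi f(r)^m \exp(-c\, m^{1/5}), \]
which is negligible. On $I$, a Taylor expansion yields
\[ \log f(re^{i\theta}) = \log f(r) + it\theta - \tfrac{1}{2}\sigma^2(r)\theta^2 + O(\theta^3), \]
where $\sigma^2(r) = r\phi'(r)$ is strictly positive on $[r_-, r_+]$ (uniformly, by compactness, since otherwise $\phi$ would be constant on an interval, forcing a rational identity incompatible with strong positivity). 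Standard Gaussian integration then gives
\[ [z^n] f^m = \frac{f(r)^m}{r^n\sqrt{2\pi m\,\sigma^2(r)}}\bigl(1 + o(1)\bigr) > 0 \]
for $m$ sufficiently large, uniformly in $n \in [\delta m,(d-\delta)m]$.

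The main obstacle I anticipate is extracting the uniform estimate $|f(re^{i\theta})| \leq (1 - c\theta^2)f(r)$ on the outer arc: strong positivity as stated is only a strict pointwise inequality. The way around this is compactness (we work only on $r \in [r_-, r_+]$ and $\theta$ bounded away from $0$), together with a Taylor bound near $\theta = 0$ to patch the two regimes. A secondary nuisance is verifying that $\sigma^2(r) > 0$ uniformly; this amounts to showing that $\phi$ has no flat interval, which follows from $f$ not being a monomial (guaranteed by $a_0, a_d > 0$ and $\deg f \geq 1$).
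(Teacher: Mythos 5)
This lemma is imported from De Angelis and the paper does not reprove it, so your proposal has to stand on its own; its architecture (Cauchy integral, choice of $r$ so that $rf'(r)/f(r)=n/m$, inner/outer arc split, compactness in $r\in[r_-,r_+]$ for uniformity) is exactly the standard saddle-point proof used by De Angelis and by Bergweiler--Eremenko--Sokal, and most of the steps check out: the confinement of the saddle radii to a compact subinterval of $(0,\infty)$, the $m^{-2/5}$ cutoff (so that $m|\theta|^3\le m^{-1/5}$ on the inner arc), and the exponential smallness of the outer arc are all fine \emph{granted} the two uniform inputs you isolate.

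The genuine gap is your justification that $\sigma^2(r)=r\phi'(r)>0$ uniformly on $[r_-,r_+]$. You argue that otherwise ``$\phi$ would be constant on an interval,'' but that is not what failure looks like: $\sigma^2$ is a rational function, and a single isolated zero $\sigma^2(r_0)=0$ with $r_0\in[r_-,r_+]$ makes $\phi$ non-constant everywhere yet destroys both your Gaussian approximation on the inner arc and your bound $|f(re^{i\theta})|\le(1-c\theta^2)f(r)$ near $\theta=0$ (note also that your ``compact set'' $\{r_t\}\times\{m^{-2/5}\le|\theta|\le\pi\}$ depends on $m$ and accumulates at $\theta=0$, so compactness alone cannot produce an $m$-independent constant there; as you say, the patch near $0$ again needs $\sigma^2>0$). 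What strong positivity gives for free is only $\sigma^2(r)\ge 0$: since $\theta\mapsto\log|f(re^{i\theta})|$ has a maximum at $\theta=0$, its second derivative $-r\phi'(r)$ is $\le 0$. Upgrading this to strict positivity of $P(r)R(r)-Q(r)^2$ (with $P=f$, $Q=rf'$, $R=r(rf')'$) for every $r>0$ is the real technical heart of the lemma --- it is a separate lemma in De Angelis's and in Bergweiler--Eremenko--Sokal's treatments and requires an argument using strong positivity globally (e.g.\ via the strict-convexity/equality case of Hadamard's three-circles theorem applied to $\max_{|z|=r}|f(z)|=f(r)$, together with ruling out higher-order degeneracies), not a soft compactness remark. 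Until that is supplied, the proof is incomplete at its most critical point.
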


This lemma is an immediate consequence of \cite[Theorem 4.1]{deAngelis-positivity}, which provides an asymptotic expansion for the coefficients $[z^n] f(z)^m$, for $n/m$ lying in a compact subset of $(0,\infty)$, under the assumption of strong positivity.  We note that De Angelis uses the notation $\sim$ to denote an \emph{asymptotic expansion} which in particular means that the first term of the sum is larger then the remainder; since the first term of the expansion obtained by De Angelis is positive, this shows our Lemma \ref{lem:middle-coeffs}.

For the proof of Lemma \ref{lem:first-delta} we will again only require the positive covering property, and not strong-positivity.  The proof is entirely analytic, and makes use of the \emph{saddle-point method}: the coefficient $[z^n] f(z)^m$ is written as a contour integral using Cauchy's integral formula; by a careful choice of the contour, we may minimize the oscillation in the integrand and show that the integral is dominated by the piece of the contour nearest to the positive real-axis.  This method was used by Bergweiler, Eremenko and Sokal \cite{bes} in their proof of Theorem \ref{th:deAngelis} and is similar to De Angelis' proof \cite{deAngelis-positivity}. Our new ingredient starts with the observation that the positive covering property implies a quantitative version of strong positivity for $z$ in a neighborhood of the origin. 
This observation is captured in the following lemma. 
 
\begin{lemma}\label{lem:quant-positive}
	Let $f(z) = 1 + a_1 z + \cdots + a_{d-1}z^{d-1} + z^d$ satisfy the positive covering property and assume that $f$ is not of the form $g(z^{\ell})$, for $g \in \R[z]$, $\ell \geq 2$.
	If $\theta_0 \in (0,\pi)$ then there is a constant $c = c(f,\t_0) >0$  so that for all $|\theta| \geq \theta_0$  we have 
	\begin{equation} \label{eq:state-quant-positive}  |f(r\eit)| \leq (1 - cr^d)|f(r)|  \, , \end{equation}
	for sufficiently small $r$.
\end{lemma}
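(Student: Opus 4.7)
\emph{Proof plan.} I will work with the squared quantity
\[ F(r,\theta) := f(r)^2 - |f(re^{i\theta})|^2. \]
Since $f(r) + |f(re^{i\theta})| \to 2$ as $r \to 0$, showing $F(r,\theta) \gtrsim r^d$ uniformly for $|\theta|\geq \theta_0$ and small $r$ will give the multiplicative estimate after dividing by $f(r) + |f(re^{i\theta})|$. By evenness in $\theta$ it suffices to consider $\theta \in [\theta_0, \pi]$. Expanding gives
\[ F(r,\theta) = 2\sum_{j<k} a_j a_k\, r^{j+k}(1 - \cos((k-j)\theta)) = 2(P^{++}+P^{--}-N), \]
where $P^{++}, P^{--}$ are the non-negative contributions from pairs with both indices in $S^+(f)$ (respectively $S^-(f)$) and $N \geq 0$ is the sum over mixed-sign pairs. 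The key positive sub-sum is
\[ Q(r,\theta) := \sum_{k \in S^+(f)\setminus\{0\}} a_k r^k (1-\cos(k\theta)), \]
which consists of the $(0,k)$ summands of $P^{++}$ (using $a_0=1$), so $Q \leq P^{++}$.

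\textbf{Step 1 (uniform lower bound on $Q$).} The positive covering property forces $\gcd(S^+(f)\setminus\{0\}) = 1$: otherwise every exponent of $f$ would be a multiple of some $\ell \geq 2$, contradicting $f \neq g(z^\ell)$. Hence for every $\theta \in [\theta_0,\pi]$ some $k \in S^+(f)\setminus\{0\}$ satisfies $1-\cos(k\theta) > 0$, and compactness of $[\theta_0,\pi]$ upgrades this to a uniform $c_1 > 0$ with $\max_{k}(1-\cos(k\theta)) \geq c_1$. Since all indices are $\leq d$, this yields $Q(r,\theta) \geq c_2\, r^d$ for $r \leq 1$.

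\textbf{Step 2 (bounding $N$ by $Q$, the main obstacle).} Using $1-\cos = 2\sin^2$, each mixed-sign contribution is proportional to $|a_j||a_k| r^{j+k}\sin^2((k-j)\theta/2)$. Take a mixed pair with, say, $k \in S^-(f)$ (the other case is symmetric); by positive covering, write $k = s_1+\cdots+s_t$ with $s_i \in S^+(f)\setminus\{0\}$ and $t \geq 2$. Iterating the elementary inequality $|\sin(\alpha\pm\beta)| \leq |\sin\alpha|+|\sin\beta|$ on $(k-j)\theta/2 = s_1\theta/2 + \cdots + s_t\theta/2 - j\theta/2$ and then Cauchy--Schwarz gives
\[ \sin^2((k-j)\theta/2) \leq (t+1)\Bigl[\sin^2(j\theta/2) + \sum_{i=1}^{t}\sin^2(s_i\theta/2)\Bigr]. \]
For each $\ell \in \{j, s_1, \ldots, s_t\}$ with $\ell > 0$ (so $a_\ell > 0$), rewrite
\[ |a_j||a_k| r^{j+k}\sin^2(\ell\theta/2) = \frac{|a_j||a_k|}{a_\ell}\,r^{j+k-\ell}\cdot\bigl(a_\ell r^\ell \sin^2(\ell\theta/2)\bigr). \]
A short case check using $t\geq 2$ and $s_i \geq 1$ shows $j+k-\ell \geq 1$ in every relevant subcase, so the prefactor is $O(r)$ for $r\leq 1$, while $a_\ell r^\ell \sin^2(\ell\theta/2)$ is a summand of $Q/2$. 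Summing over the finitely many mixed pairs gives $N \leq C_f\, r\, Q$ for a constant $C_f$ depending only on $f$.

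\textbf{Step 3 (conclusion).} For $r \leq 1/(2C_f)$ we obtain $F \geq 2(Q - C_f r Q) \geq Q \geq c_2\, r^d$. Factoring $F = (f(r)-|f(re^{i\theta})|)(f(r)+|f(re^{i\theta})|)$ and using that the second factor tends to $2$ as $r \to 0$ gives $f(r) - |f(re^{i\theta})| \gtrsim r^d$, and since $f(r) \to 1$ this converts to $|f(re^{i\theta})| \leq (1 - c\, r^d)f(r)$ for a suitable $c = c(f,\theta_0) > 0$. The main obstacle is Step 2: the factor of $r$ that makes the negative contributions absorbable into $Q$ emerges precisely from combining the sine sum inequality with the positive covering decomposition of each negative exponent into positive ones.
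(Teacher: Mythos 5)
Your proof is correct, and it takes a genuinely different route from the paper's. Both arguments start from the same expansion $f(r)^2-|f(re^{i\theta})|^2=2\sum_{j<k}a_ja_kr^{j+k}(1-\cos((k-j)\theta))$, but the paper then fixes $\phi\in[\theta_0,\pi]$, splits the exponents according to whether $e^{ij\phi}=1$, and runs a leading-order analysis in two cases (depending on whether the minimal ``non-vanishing'' exponent $t_0$ precedes the minimal ``vanishing'' one $t_\ast$), using the covering property only to show that the relevant leading coefficients $a_{t_0},a_{t_\ast}$ are positive; it then patches over $\phi$ by compactness. You instead give a single global absorption argument: the $(0,k)$ terms with $k\in S^+(f)\setminus\{0\}$ supply a positive quantity $Q\gtrsim r^d$ (using $\gcd(S^+(f)\setminus\{0\})=1$, which you correctly derive from the covering property together with $f\neq g(z^\ell)$), and every mixed-sign cross term is dominated by $O(r)\cdot Q$ via the covering decomposition of the negative exponent, the subadditivity $|\sin(\alpha+\beta)|\le|\sin\alpha|+|\sin\beta|$, and Cauchy--Schwarz; the decisive point that $j+k-\ell\ge 1$ in all subcases (because any decomposition of a negative exponent into nonzero positive exponents has at least two parts) is exactly right. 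Your approach buys a cleaner, manifestly uniform constant in $\theta$ and avoids the case split and the compactness-over-$\phi$ patching, at the cost of a slightly more delicate combinatorial bookkeeping in Step 2; the paper's approach is more in the spirit of identifying dominant terms and extends naturally to the asymptotic expansions used elsewhere in its saddle-point analysis.
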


\begin{proof}
By compactness of the set $[\theta_0,\pi]$, it is sufficient to show that \eqref{eq:state-quant-positive} holds for a neighborhood of each $\phi \in [\theta_0,\pi]$ with a constant which may depend on $\phi$. 

Fix $\phi \in [\theta_0,\pi]$ and define $T := \{ j \geq 1 : a_j \neq 0\ \mathrm{ and }\ e^{ij\phi} \neq 1 \}, $ and $T^{\ast} := \{ j \geq 1 : a_j \neq 0\ \mathrm{ and }\ e^{ij\phi} = 1 \}$ so that $T\cup T^{\ast} = S(f) \setminus \{0\} = \{ k \geq 1 : a_k \not= 0\}$. Note that if $T = \emptyset$ then we must have 
$T^{\ast} \not= \emptyset$ and therefore $2\pi/\phi \in \Z$. Thus $f(z) = g(z^{\ell})$ for $\ell = 2\pi/\phi \in \Z$, contradicting the condition on $f$.  So we can assume $T \not= \emptyset$ and thus we may set $t_0 := \min T$. There are two cases: where $t_0 < t$ for all $t \in T^\ast$ and where $\min T^\ast < t_0$.

In either case, we expand  
\begin{equation} \label{eq:fdiff-exp} 
|f(r)|^2 - |f(r \eit)|^2= \sum_{j\geq 1} r^j \left( \sum_{p+q=j} a_pa_q ( 1 - \cos((p-q)\t)) \right)
\end{equation}
where the inner sum is over all ordered pairs $(p,q)$ of non-negative integers summing to $j$.

\begin{case}\label{case:first}
	$t_0 < t$ for all $t \in T^\ast$.
\end{case}
Since $T \cup T^\ast = S(f) \setminus \{0\}$, we have that $f(z) = 1 + a_{t_0} z^{t_0} + o(|z|^{t_0})$ as $|z| \to 0$; the positive covering hypothesis then implies $a_{t_0} > 0$.  The expansion \eqref{eq:fdiff-exp} can be written  
$$
|f(r)|^2 - |f(r \eit)|^2= 2 a_{t_0} r^{t_0} (1 - \cos(t_{0} \theta)) + o(r^{t_0})\,.
$$
 
Since $t_0 \in T$, the term $1 - \cos(t_0\theta)$ does not vanish for $\theta$ in some neighborhood of $\phi$.  In particular, there is some neighborhood on which $1 - \cos(t_0 \theta) \geq c$ for some $c > 0$.  For $r$ sufficiently small, the term $c a_{t_0} r^{t_0}$ dominates the $o(r^{t_0})$ term, and so there is the bound 
$$|f(r)|^2 - |f(r \eit)|^2 \geq \frac{1}{2} c a_{t_0} r^{t_0} $$
for $\theta$ in some neighborhood of $\phi$ and $r$ sufficiently small. Dividing by $f(r)^2$ and utilizing $f(r) \sim 1$ as $r \to 0$ completes the lemma in this case.
 
\begin{case} $\min T^\ast < t$.
 \end{case}

The expansion \eqref{eq:fdiff-exp} will again be used.  The idea will be to break the summands into terms that vanish at $\theta = \phi$ and those that do not; the lowest degree terms of each type will have positive coefficients and dominate the other terms in their respective sum for small $r$.  To begin, let $t_\ast := \min T^\ast$.  The positive covering hypothesis implies that $a_{t_\ast} > 0$ since $a_{t_\ast} z^{t_\ast}$ is the lowest degree term in $f$.  We now observe that, the positive covering hypothesis implies that $a_{t_0} > 0$ as well.

\begin{claim}\label{claim:t0-positive}
	 $a_{t_0} > 0$.
\end{claim} 
\begin{proof}[{Proof of Claim \ref{claim:t0-positive}}] For a contradiction, assume that $a_{t_0} < 0$.  The positive covering hypothesis implies that there exist $j_1,\ldots,j_k$ with $j_1 + \cdots + j_k = t_0$ and each $a_{j_i} > 0$.  By minimality of $t_0$, it must be the case that each $j_i \in T^\ast$.  However, this would imply $$e^{i t_{0} \phi} = e^{i j_1 \phi} \cdots e^{i j_k \phi} = 1$$ contradicting the assumption that $t_0 \in T$.  \end{proof}

We now group the terms that appear in \eqref{eq:fdiff-exp}. We define
\[ \mathsf{V} := \{(p,q): a_pa_q \neq 0,\cos((p-q)\phi) = 1  \}\] and 
\[ \mathsf{NV}:= \{(p,q) : a_pa_q \neq 0 , \cos((p-q)\phi) \neq 1\}.
\] And so, from \eqref{eq:fdiff-exp}, we may express $|f(r)|^2 - |f(r\eit)|^2$ as 

\begin{equation} \label{eq:fdiff-exp-case2} \sum_{(p,q) \in \mathsf{V}} r^{p+q} a_p a_q(1 - \cos((p-q)\theta) )
+ \sum_{(p,q) \in \mathsf{NV}} r^{p+q} a_p a_q(1 - \cos((p-q)\theta) )\,. 
\end{equation}

Each of the two sums in \eqref{eq:fdiff-exp-case2} will be lower bounded individually.  Beginning with the sum over $\mathsf{NV}$,
we note that if both $p$ and $q$ are in $T^\ast$, then $\cos((p-q)\phi) = 1$.  This implies that the lowest power of $r$ appearing in the sum over $\mathsf{NV}$ is $r^{t_0}$; in particular, we have the expansion \begin{equation} \label{eq:non-vanishing}
\sum_{(p,q) \in \mathsf{NV}} r^{p+q} a_p a_q(1 - \cos((p-q)\theta) ) = 2 a_{t_0} r^{t_0} (1 - \cos(t_0 \theta)) + o(r^{t_0})\,.
\end{equation}

As in Case \ref{case:first}, this may be bounded below by $c r^{t_0}$ for $\theta$ in a neighborhood of $\phi$ and $r$ sufficiently small.  

We treat the sum over pairs in $\mathsf{V}$ in a similar way. We express the sum 
\[ \sum_{(p,q) \in \mathsf{V}} r^{p+q} a_p a_q(1 - \cos((p-q)\theta) ) \]  as
\begin{equation}\label{eq:vanishing} 2 r^{t_\ast} a_{t_\ast} (1 - \cos(t_\ast \theta)) + \sum_{(p,q) \in \mathsf{V}: p+q > t_\ast} r^{p+q} a_p a_q(1 - \cos((p-q)\theta) )\, ,\end{equation}
where we have isolated the dominant term. Recall that $a_{t_\ast} >0$.

Since $\cos(t_\ast \phi) = 1$ and $\cos((p-q)\phi) = 1$ for each $(p,q) \in \mathsf{V}$, we may take $\theta$ in a neighborhood of $\phi$ so that we simultaneously have $$1 - \cos(t_\ast \theta) \geq \frac{1}{4}(t_\ast(\theta - \phi))^2\quad\mathrm{and}\quad 1 - \cos((p-q) \theta) \leq ((p-q)(\theta - \phi))^2\,.$$

Thus, for all $\t$ in this neighbourhood we have \begin{equation} \label{eq:vanish-final}
\sum_{(p,q) \in \mathsf{V}} r^{p+q} a_p a_q(1 - \cos((p-q)\theta) ) \geq r^{t_\ast}(\theta - \phi)^2\left(\frac{t_\ast^2}{2}a_{t_\ast} + o(1) \right) \geq 0\,,
\end{equation} as $r \rightarrow 0$.
Plugging lines \eqref{eq:non-vanishing} and \eqref{eq:vanish-final} into \eqref{eq:fdiff-exp-case2} completes the proof.
\end{proof}

\vspace{4mm}

We may now prove Lemma~\ref{lem:first-delta}.

\begin{proof}[Proof of Lemma \ref{lem:first-delta}]
	Define $k$ to be the minimum $\ell \geq 1$ for which $a_{\ell} \not= 0 $.  Thus we may write 
	\[f(z) = 1 + a_k z^k + \cdots + a_d z^d.\] We may also assume that $a_k = \frac{1}{k}$, by possibly rescaling $z$; i.e. $z \mapsto \lambda z$.
	We now write $z = \rho e^{i\theta}$ and use Cauchy's integral formula to express  \begin{equation} \label{eq:cauchy-int}
	[z^n]f^m = \frac{1}{2\pi i} \int_{|z|= \rho} \frac{f(z)^m}{z^n} \, \frac{dz}{z} =  \frac{1}{2\pi}\int_{-\pi}^\pi \frac{f(\rho e^{i\theta})^m}{(\rho e^{i\theta})^n } \, d\theta\,.
	\end{equation}	
	We will find asymptotics as $m \to \infty$ of the right hand of this equation for an appropriate choice of $\rho$. For this, set $\alpha := n/m$, and note that by Lemma \ref{lem:poly-many} we know that $[z^n]f(z)^{m} \geq 0$ for all $\alpha \leq  m^{-1/(d+1)}$ and sufficiently large $m$.  Thus, it is sufficient to assume $\alpha \geq  m^{-{1}/({d+1})}$.  
	Define \[h_\rho(\theta) := \log f(\rho \eit) - \alpha \log(\rho \eit) \] 
	so that 
	$$ \frac{f(\rho \eit)^m}{(\rho \eit)^n} = \exp\left(m \left(\log f(\rho \eit) - \alpha \log (\rho \eit) \right) \right) = \exp\left( m h_\rho(\theta) \right)\,.$$
	
	We now make a careful choice of $\rho$, which will make the integral at \eqref{eq:cauchy-int} easier to calculate: we choose $\rho$ so that $h_\rho'(0) = 0$.  
	To see that such a $\rho$ exists, we express
	$$h_\rho'(\theta) = i\left( \rho \eit \frac{f'(\rho \eit)}{f(\rho \eit)} - \alpha \right)$$ 
	and thus, as $\rho \to 0$, we have 
	$$h_\rho'(0) = i\left(\rho \frac{f'(\rho)}{f(\rho)} - \alpha \right) = i\left(\rho^k + O(\rho^{k+1}) - \alpha \right)\, ,$$ implying that for $\alpha$ sufficiently small, there exists a $\rho$ for which $h_\rho'(0) = 0$.  We also see that for this solution $\rho$, we have\begin{equation}\label{eq:rho-alpha} 
	\rho^k \sim \alpha\,.
	\end{equation}
	To proceed further, we will need control over the second and third derivative. We compute 
	\begin{align*}
	h_\rho''(\theta) &= - \left( z\frac{f'(z)}{f(z)} + z^2 \frac{f''(z)}{f(z)} -  \left(z \frac{f'(z)}{f(z)}\right)^2 \right) \\
	h_\rho'''(\theta) &= -i \left[ z\frac{f'(z)}{f(z)} + z^2 \frac{f''(z)}{f(z)} -  \left(z \frac{f'(z)}{f(z)}\right)^2+ 2 z^2\left(\frac{f''(z)}{f(z)} - \left(\frac{f'(z)}{f(z)} \right)^2  \right) \right. \\
	& \left. \qquad+ z^3 \left(\frac{f'''(z)}{f(z)} - 3\frac{f''(z)f'(z)}{f(z)^2}  + 2 \left(\frac{f'(z)}{f(z)} \right)^3 \right)   \right] \,.
	\end{align*}
	
	We care particularly about the behavior of $h_\rho''(\theta)$ and $h_\rho'''(\theta)$ for $\rho$ near zero, so we note \begin{equation} \label{eq:h-derivs-small}
	h_\rho''(\theta) \sim - k z^k,\qquad h_\rho'''(\theta) \sim -i k^2 z^k\qquad \mathrm{as}\ z \to 0\,.
	\end{equation}	
	
	By \eqref{eq:rho-alpha}, taking $\alpha \to 0$ implies $\rho \to 0$.  Therefore, by \eqref{eq:h-derivs-small}, we may take $\alpha$ sufficiently small so that each of 
	the quantities
	\begin{equation}\label{eq:h-derivs-bounds} 
	 \frac{\rho^k}{\alpha},\ \ \frac{|h_\rho''(0)|}{k \rho^k}, \ \ \sup_{\theta} \frac{|h_\rho'''(\theta)|}{k^2 \rho^k}\,
	\end{equation} lie in the interval $[3/4,5/4]$.

We now divide the integral in \eqref{eq:cauchy-int} into three pieces. For this let $\theta_0 := 1/k$ and $\eta := \eta(\rho) = (k^2 m \rho^k)^{-1/3}$. 
We then define 
	\begin{equation}
	I_1 = \int_{|\theta| < \eta} e^{m h_\rho(\theta)}\,d\theta,\quad I_2 = \int_{|\theta| \in (\eta,\theta_0)} e^{m h_\rho(\theta)}\,d\theta,\quad I_3 = \int_{|\theta| > \theta_0} e^{m h_\rho(\theta)}\,d\theta\,.
	\end{equation}  Since $f$ has real coefficients, each of these three integrals is real. 
To estimate each of these integrals, we use a Taylor expansion of $h_{\rho}(\t)$ at $\t = 0$. Indeed,
	 \begin{equation}\label{eq:h-taylor} h_\rho(\theta) = h_\rho(0) + \frac{1}{2}h_\rho''(0) \theta^2 + R(\theta) , \end{equation}
	where \begin{equation} \label{eq:h-remainer} |R(\theta)| \leq \frac{1}{6} \sup_{\theta} |h_\rho'''(\theta)| \cdot |\theta|^3 \leq  \frac{5 k^2 }{24} \rho^k |\theta^3|\,. \end{equation} Our first claim shows that $I_1$ is large and positive. 

\begin{claim} \label{claim:I1}
\[ I_1 \geq \frac{C_fe^{mh_{\rho}(0)}}{\sqrt{m\rho^k}}  , \]
where $C_f>0$ is a constant depending only on $f$. 
\end{claim}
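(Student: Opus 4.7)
The plan is a standard saddle-point / Laplace estimate on the small window $|\theta| < \eta$. The choice of $\rho$ has made $\theta = 0$ a critical point of $h_\rho$, and the cutoff $\eta = (k^2 m \rho^k)^{-1/3}$ was selected precisely so that the cubic Taylor remainder contributes a bounded (sub-unit) amount on this window.

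First, I would invoke the Taylor expansion \eqref{eq:h-taylor} (recalling $h_\rho'(0) = 0$) to write
$$e^{m h_\rho(\theta)} = e^{m h_\rho(0)} \cdot e^{\frac{m}{2} h_\rho''(0)\theta^2} \cdot e^{m R(\theta)}.$$
A key preliminary observation is that $h_\rho''(0)$ is real and negative: at the real point $z = \rho > 0$, the explicit formula for $h_\rho''$ is a sum of real terms, and \eqref{eq:h-derivs-small} together with \eqref{eq:h-derivs-bounds} yields $|h_\rho''(0)| \in [\tfrac{3}{4} k\rho^k,\, \tfrac{5}{4} k\rho^k]$. For the remainder, combining \eqref{eq:h-remainer} with the definition of $\eta$ gives $|m R(\theta)| \leq 5/24$ uniformly on $|\theta| < \eta$, and hence $\Re(e^{m R(\theta)}) \geq e^{-5/24}\cos(5/24) =: c_1 > 0$.

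Since $I_1$ is real (using $h_\rho(-\theta) = \overline{h_\rho(\theta)}$) and the Gaussian factor $e^{\frac{m}{2} h_\rho''(0)\theta^2}$ is a positive real, taking real parts reduces the problem to the one-sided Gaussian bound
$$I_1 \geq c_1 e^{m h_\rho(0)} \int_{|\theta| < \eta} e^{-\frac{m}{2} |h_\rho''(0)|\theta^2}\,d\theta.$$
Substituting $u = \theta\sqrt{m|h_\rho''(0)|}$ converts this to $\sigma \int_{|u| < \eta/\sigma} e^{-u^2/2}\,du$ with $\sigma \asymp (m k \rho^k)^{-1/2}$, and a short calculation gives $\eta/\sigma$ of order $(m\rho^k)^{1/6}$ up to $k$-dependent constants.

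The only step where earlier material is needed is confirming $\eta/\sigma \to \infty$: under the assumption $\alpha \geq m^{-1/(d+1)}$ made just before the definition of $\eta$ (which was in turn justified via Lemma~\ref{lem:poly-many}), the relation $\rho^k \sim \alpha$ from \eqref{eq:rho-alpha} gives $m\rho^k \gtrsim m^{d/(d+1)} \to \infty$. So for large $m$ the truncated Gaussian integral is at least $\sqrt{2\pi}/2$, which yields $\int_{|\theta| < \eta} e^{-\frac{m}{2}|h_\rho''(0)|\theta^2}\,d\theta \gtrsim (m \rho^k)^{-1/2}$ and hence the claim with an appropriate constant $C_f > 0$. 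The main bookkeeping obstacle is simply tracking this scaling between $\eta$ and $\sigma$; everything else is standard Laplace-method.
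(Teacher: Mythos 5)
Your proposal is correct and follows essentially the same route as the paper: Taylor-expand $h_\rho$ at the critical point, use the choice of $\eta$ to make $|mR(\theta)|\leq 5/24$ so the remainder factor has real part bounded below by a positive constant, and reduce to a truncated Gaussian integral whose upper limit $\asymp (m\rho^k)^{1/6}$ tends to infinity since $m\rho^k\gtrsim m\alpha\geq m^{d/(d+1)}$. Your explicit note that $h_\rho''(0)$ is real and negative, and your bound on $\Re(e^{mR(\theta)})$ via both real and imaginary parts of $mR$, make fully precise a step the paper leaves implicit.
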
	
\begin{proof}[{Proof of Claim~\ref{claim:I1}}] Since $\overline{h}_\rho(\theta) = h_\rho(-\theta)$, 
we have \[ I_1 = 2 \int_0^\eta \Re (e^{m h_\rho(\theta)})\,d\theta\,.
\] To obtain a lower bound on the real part of the integrand, we obtain an upper bound $\Im (m R(\theta))$ for $\theta \in (0, \eta)$ by using \eqref{eq:h-remainer}, to get $$\Im(m R(\theta)) \leq \frac{5 k^2}{24}m \rho^k(k^2 m \rho^k)^{-1} = \frac{5}{24}\,.$$ 
	
	Now, using the expansion at \eqref{eq:h-taylor} and the choice of $\eta= (k^2 m \rho^k)^{-1/3}$, we obtain a lower bound\begin{align*}
	I_1  &= 2 \int_0^\eta \Re (e^{m h_\rho(\theta)})\,d\theta \\
	&\geq c_1 e^{m h_\rho(0)} \int_0^\eta \exp\left({m \frac{h_\rho''(0)}{2}\theta^2 } \right)\,d\theta \\
	&\geq c_1 e^{m h_\rho(0)} \int_0^{(k^2 m \rho^k)^{-1/3}} \exp\left(- \frac{5k}{8} m \rho^k \theta^2 \right)\,d\theta \\
	&= \frac{c_2}{\sqrt{k m \rho^k }} e^{mh_\rho(0)} \int_0^{c_3( m \rho^k / k )^{1/6}} e^{-x^2} \,dx \\
	&\geq \frac{C_f}{\sqrt{m \rho^k}} e^{m h_\rho(0)},
	\end{align*}
	where we have used $m \rho^k \to \infty$ and set 
	\[ C_f := \frac{c_2}{\sqrt{k}} \int_0^1 e^{-x^2}\,dx >0,\]
	which is a constant, depending only on $f$. This completes the proof of the claim. 
\end{proof}
	
The next two claims show that $I_2$ and $I_3$ are small. 	

\begin{claim}\label{claim:I2}
\[|I_2| \leq e^{m h_{\rho}(0)} e^{-c_2(m\rho^k)^{1/3}}, \]
Where $c_2>0$ is a constant depending only on $f$.
\end{claim}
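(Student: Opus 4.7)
The plan is to obtain a uniform pointwise upper bound on $|e^{mh_\rho(\theta)}| = e^{m\Re h_\rho(\theta)}$ throughout the annulus $|\theta| \in (\eta, \theta_0)$, and then bound $|I_2|$ by the length of this annulus times the maximum.

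First I would use the Taylor expansion \eqref{eq:h-taylor} with the remainder estimate \eqref{eq:h-remainer}. Note that $h_\rho(0) = \log f(\rho) - \alpha \log \rho$ and $h_\rho''(0)$ are both real, since $f$ has real coefficients and $\rho > 0$. The second-derivative estimate in \eqref{eq:h-derivs-bounds}, combined with $h_\rho''(0) \sim -k\rho^k$, gives $h_\rho''(0) \leq -\tfrac{3k}{4}\rho^k$. Taking real parts therefore yields, for $|\theta| \leq \theta_0 = 1/k$,
\begin{equation*}
\Re h_\rho(\theta) \leq h_\rho(0) - \tfrac{3k}{8}\rho^k\theta^2 + \tfrac{5k^2}{24}\rho^k|\theta|^3.
\end{equation*}

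The key step is to use the choice $\theta_0 = 1/k$ to absorb the cubic error into the quadratic term. Indeed, for $|\theta| \leq 1/k$ we have $k|\theta|^3 \leq \theta^2$, so the cubic contribution is at most $\tfrac{5k}{24}\rho^k\theta^2$, which is strictly less than the quadratic $\tfrac{3k}{8}\rho^k\theta^2 = \tfrac{9k}{24}\rho^k\theta^2$. Combining these gives $\Re h_\rho(\theta) \leq h_\rho(0) - \tfrac{k}{6}\rho^k\theta^2$. Using $\theta^2 \geq \eta^2 = (k^2 m\rho^k)^{-2/3}$ on the range of integration then yields
\begin{equation*}
m\Re h_\rho(\theta) \leq mh_\rho(0) - \tfrac{1}{6k^{1/3}}(m\rho^k)^{1/3}.
\end{equation*}

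To conclude, I would write $|I_2| \leq 2\theta_0 \cdot e^{mh_\rho(0)} \exp\!\bigl(-\tfrac{1}{6k^{1/3}}(m\rho^k)^{1/3}\bigr)$ and absorb the multiplicative factor $2/k$ into the exponential. Since $\rho^k \sim \alpha \geq m^{-1/(d+1)}$, we have $m\rho^k \to \infty$, so the extra factor is negligible once $m$ is sufficiently large, and we obtain the claim with any constant $c_2 < \tfrac{1}{6k^{1/3}}$. There is no serious obstacle here; the only point requiring care is the balance between the quadratic and cubic terms in the Taylor expansion, which is precisely what dictates the choice $\theta_0 = 1/k$ (small enough for the cubic to be dominated, large enough to capture the full saddle-point window).
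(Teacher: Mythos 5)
Your proposal is correct and follows essentially the same route as the paper: Taylor expansion of $h_\rho$ at $0$, the bound $\Re\bigl(\tfrac{1}{2}h_\rho''(0)\theta^2 + R(\theta)\bigr) \leq -\tfrac{k}{6}\rho^k\theta^2$ for $|\theta|\leq 1/k$, and then evaluation at the inner edge $|\theta|=\eta$. The only cosmetic difference is that you bound the integrand uniformly by its value at $\eta$ and multiply by the interval length, while the paper keeps the Gaussian integral; your exponent $\tfrac{1}{6}k^{-1/3}(m\rho^k)^{1/3}$ is in fact the correct evaluation of $\tfrac{k}{6}m\rho^k\eta^2$ (the paper's $\sqrt{k}$ appears to be a typo), and this is immaterial since $c_2$ need only depend on $f$.
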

\begin{proof}[Proof of Claim~\ref{claim:I2}]  Note that for $|\theta| \leq \frac{1}{k}$, we have 
	$$\Re\left(\frac{h''(0)}{2} \theta^2 + R(\theta)\right) \leq -\frac{3k}{8}\rho^k \theta^2 + \frac{5}{24} k^2 \rho^k|\theta^3| = k \rho^k \theta^2 \left(-\frac{3}{8} + \frac{5}{24}k|\theta| \right) < - \frac{k \rho^k \theta^2}{6}\,. $$
	
Thus, using \eqref{eq:h-taylor} and the triangle inequality, we have $$\left|I_2 \right| \leq e^{mh_\rho(0)}\int_{|\theta| \in (\eta,1/k)}  e^{-\frac{k}{6} m \rho^k \theta^2 }\,d\theta \leq \frac{2}{k} e^{m h_\rho(0)} e^{-\frac{k^{-1/3}}{6}(m\rho^k)^{1/3}}\,.$$\end{proof}

	\begin{claim}\label{claim:I3}
	$$|I_3| \leq e^{m h_\rho(0)} \exp\left(-c_3 m^{\frac{1}{d+1}} \right)\,, $$ where $c_3 >0$ is an absolute constant.
	\end{claim}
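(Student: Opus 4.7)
The plan is to use Lemma~\ref{lem:quant-positive} with the choice $\theta_0 = 1/k$ (noting $1/k \in (0, \pi)$ since $k \geq 1$), which gives a constant $c = c(f) > 0$ such that
\[ |f(\rho e^{i\theta})| \leq (1 - c\rho^d) f(\rho) \]
for all $|\theta| \geq \theta_0$ and all $\rho$ sufficiently small. Since $\rho \to 0$ as $\alpha \to 0$ by \eqref{eq:rho-alpha}, this bound is available throughout our asymptotic regime.

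Next, I would raise this to the $m$-th power and use $(1-x)^m \leq e^{-mx}$ to obtain
\[ |f(\rho e^{i\theta})|^m \leq e^{-cm\rho^d} f(\rho)^m \]
on the range $|\theta| > \theta_0$. Dividing by $\rho^n$ and recognizing that $e^{m h_\rho(0)} = f(\rho)^m/\rho^n$ yields
\[ \frac{|f(\rho e^{i\theta})|^m}{\rho^n} \leq e^{-cm\rho^d} \, e^{m h_\rho(0)}. \]

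It remains to show $m\rho^d \geq c' m^{1/(d+1)}$ for some constant $c' > 0$. Here I would use the standing assumption $\alpha \geq m^{-1/(d+1)}$ together with $\rho^k \sim \alpha$ from \eqref{eq:rho-alpha}, which gives $\rho^d = (\rho^k)^{d/k} \gtrsim \alpha^{d/k} \geq m^{-d/(k(d+1))}$. Since $1 \leq k \leq d$, the exponent satisfies
\[ 1 - \frac{d}{k(d+1)} \geq 1 - \frac{d}{d+1} = \frac{1}{d+1}, \]
so $m \rho^d \gtrsim m^{1/(d+1)}$. Finally, bounding the integral by $2\pi$ times the supremum of the integrand,
\[ |I_3| \leq \int_{|\theta| > \theta_0} \frac{|f(\rho e^{i\theta})|^m}{\rho^n}\, d\theta \leq 2\pi\, e^{m h_\rho(0)}\, e^{-c m\rho^d} \leq e^{m h_\rho(0)} \exp\!\left(-c_3 m^{1/(d+1)}\right), \]
after absorbing the $2\pi$ into a slightly smaller constant $c_3 > 0$. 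There is no real obstacle here beyond tracking the exponents; the crux of the argument is packaged in Lemma~\ref{lem:quant-positive}, which supplies the quantitative gap $c\rho^d$ between $|f(\rho e^{i\theta})|$ and $f(\rho)$ away from the positive real axis.
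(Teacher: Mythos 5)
Your proposal is correct and follows essentially the same route as the paper: apply Lemma~\ref{lem:quant-positive} with $\theta_0 = 1/k$, use $(1-c\rho^d)^m \leq e^{-cm\rho^d}$, and convert $m\rho^d$ into $m^{1/(d+1)}$ via $\rho^k \sim \alpha$ and $\alpha \geq m^{-1/(d+1)}$. The only cosmetic difference is that you track the exponent $d/k$ exactly where the paper simply bounds $\alpha^{d/k} \geq \alpha^d$; both yield the stated estimate.
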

\begin{proof}[Proof of Claim~\ref{claim:I3}]
	To bound $I_3$, we can apply Lemma \ref{lem:quant-positive} with $\theta_0 = \frac{1}{k}$ to get a constant $c > 0$ so that for $|\theta| > \frac{1}{k}$ we have $$\frac{|f(\rho \eit)|^m}{\rho^n} \leq \frac{f(\rho)^m}{\rho^n}(1 - c \rho^d)^m = e^{mh_\rho(0)} (1 - c\rho^d)^m\,.$$
	
	Using that $\rho^k \geq c\alpha$ and therefore, $\rho^d \geq c\alpha^{d/k} \geq c\alpha^{d}$, we have
	
	$$(1 - c \rho^d)^m \leq \exp\left(- c m\rho^d \right) \leq \exp\left(- c_3 m \alpha^d \right) \leq \exp\left(- c_3 m^{\frac{1}{d+1}} \right)\,, $$
	where the last inequality follows from our assumption that $\alpha \geq m^{-1/(d+1)}$.
	
	This gives an upper bound on $$|I_3| \leq 2\pi e^{m h_\rho(0)} \exp\left(-c_3 m^{\frac{1}{d+1}} \right)\,.$$ This completes the proof of Claim~\ref{claim:I3}.
\end{proof}
	
\vspace{4mm}

We now simply apply Claims~\ref{claim:I1}, \ref{claim:I2} and \ref{claim:I3} to estimate \eqref{eq:cauchy-int} and finish. Indeed, we have \begin{align*} 2\pi [z^n]f(z)^m &\geq I_1 - |I_2| - |I_3| \\
	&\geq e^{m h_\rho(0)} \left(\frac{C_f}{\sqrt{m\rho^k}}  - e^{-c_2(m\rho^k)^{1/3}} -  e^{-c_3 m^{\frac{1}{d+1}}} \right) \\
	&> 0, \end{align*} for sufficiently large $m$. This completes the proof of Lemma~\ref{lem:first-delta}.\end{proof}

\vspace{4mm}

Before diving into the ``hard'' direction of our equivalence, we quickly spell out details of the ``easy'' direction.

\begin{lemma}\label{lem:easy-direction}
If $f \in \R[x]$ is non-constant and eventually non-negative then $f(x) = x^kg(x^{\ell})$, where $g$ is strongly positive and satisfies the positive covering property.
\end{lemma}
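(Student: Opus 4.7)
The plan is to factor out the trivial structure of $f$ and then verify each of the three conditions on the resulting polynomial $g$ directly from the fact that $g^m$ has non-negative coefficients for all large $m$. Set $k := \min S(f)$ and $\ell := \gcd\{n-k : n \in S(f)\}$, and define $g(y) = a_0 + a_1 y + \cdots + a_d y^d$ by $f(x) = x^k g(x^\ell)$. Then $0 \in S(g)$ and $\gcd S(g) = 1$, so $g$ is not of the form $h(z^{\ell'})$ with $\ell' \geq 2$. Since $[x^n] f^m = [y^{(n-km)/\ell}] g^m$ when the index is integral, eventual non-negativity of $f$ transfers to $g$; applying this to the extremal coefficients $[y^0]g^m = a_0^m$ and $[y^{md}] g^m = a_d^m$ for two consecutive large values of $m$ gives $a_0, a_d > 0$.

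For the positive covering property, expand
\[
[y^n]g^m \;=\; a_0^m\sum_{\substack{(k_1,\dots,k_d) \\ \sum_i i k_i = n}} \frac{(m)_{|k|}}{k_1!\cdots k_d!}\, a_0^{-|k|}\prod_{i\geq 1} a_i^{k_i},
\]
which is $a_0^m$ times a polynomial $Q_n(m)$ of degree $k^*(n) := \max\{|k| : \sum_i i k_i = n,\ k_i \geq 0,\ i \in S(g)\}$. Suppose towards contradiction that some $n \in S^-(g)$ is not in the iterated sumset of $S^+(g)\setminus\{0\}$ and let $n^*$ be minimal such. The crux is a \emph{lengthening step}: given a composition $(k_1,\dots,k_d)$ of $n^*$ with some $k_j > 0$ for $j \in S^-(g)$, $j < n^*$, minimality of $n^*$ lets us write $j = q_1+\cdots+q_r$ with $q_i \in S^+(g)\setminus\{0\}$, and since $j \notin S^+(g)$ we must have $r \geq 2$; replacing one copy of $j$ by $q_1,\dots,q_r$ yields a composition of length $|k|-1+r > |k|$. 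Now if $(k)$ is any composition of $n^*$ with $|k| \geq 2$, it must contain some $S^-$-part (else $n^*$ would be in the sumset), and any $S^-$-part equal to $n^*$ would force $|k|=1$, so $(k)$ has an $S^-$-part strictly smaller than $n^*$ and is lengthenable. Iterating produces compositions of $n^*$ with strictly increasing lengths bounded by $n^*$, a contradiction. Hence $k^*(n^*)=1$ with unique maximizer $(k_{n^*}=1)$, so $Q_{n^*}(m) = (a_{n^*}/a_0)m$ is negative for all large $m$, contradicting eventual non-negativity. The same argument applied to the reversed polynomial $y^d g(1/y)$ (which is again eventually non-negative, with positive constant term $a_d$) yields the other half of the positive covering property.

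For strong positivity, the triangle inequality applied to $g^m$ with non-negative coefficients gives $|g(z)|^m = |g^m(z)| \leq g^m(|z|) = g(|z|)^m$; since $g(r) > 0$ for $r \geq 0$ (if $g(r)=0$ then $g^m(r)=0$ with all summands non-negative, forcing the summand $a_0^m = 0$), taking $m$-th roots yields $|g(z)| \leq g(|z|)$. For strict inequality on $\C \setminus \R_{\geq 0}$, suppose $|g(z_0)| = g(|z_0|)$ at $z_0 = r_0 e^{i\theta_0}$ with $\theta_0 \in (-\pi, \pi] \setminus \{0\}$. Equality in the triangle inequality applied to $g^m(z_0) = \sum a_n^{(m)} z_0^n$ (all $a_n^{(m)} \geq 0$) forces $(n-n')\theta_0 \in 2\pi\Z$ for all $n, n' \in S(g^m)$. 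For each $n \in S^+(g)$ the polynomial $Q_n$ has nonzero linear coefficient $a_n/a_0$, so $Q_n \not\equiv 0$; combined with $Q_n(m) \geq 0$ for large $m$, this yields $a_n^{(m)} > 0$ for large $m$, and hence $S^+(g) \subseteq S(g^m)$. By the positive covering property just established, $\gcd S^+(g) = 1$, so $\gcd S(g^m) = 1$, forcing $\theta_0 \in 2\pi\Z$ and contradicting $z_0 \notin \R_{\geq 0}$. The main obstacle throughout is the combinatorial lengthening argument underpinning the positive covering property.
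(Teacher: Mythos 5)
Your proof is correct and follows essentially the same strategy as the paper: reduce to the primitive factor $g$, obtain strong positivity from the triangle inequality applied to a non-negative power $g^m$ together with a gcd/arithmetic-progression argument for strictness, and obtain the covering property by showing that a minimal violator $n^*$ admits only the trivial decomposition, so that $[y^{n^*}]g^m = m\,a_0^{m-1}a_{n^*} < 0$ for every $m$ (your iterative ``lengthening'' is just an unrolled version of the paper's one-shot substitution). One micro-correction: the coefficient of $m$ in $Q_n(m)$ is not $a_n/a_0$ in general, since compositions with $|k|\geq 2$ also contribute to the degree-one term of $(m)_{|k|}$; the clean way to see $Q_n\not\equiv 0$ is that the identity $[y^n]g^m = a_0^m Q_n(m)$ holds at $m=1$, giving $Q_n(1)=a_n/a_0\neq 0$, which is all you need.
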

\begin{proof}We may write $f(z) = z^kg(z^{\ell})$, with $g(z) = b_0 + b_1z + \cdots +b_dz^d$, where $b_0,b_d \not=0$ and $g(z)$ is not equal to any polynomial of the form 
$z^{k'}h(z^{\ell'})$, for $k' \geq 0$ and $\ell'\geq 2$. Also note that since $f$ is eventually non-negative, we must have that $b_0 > 0$. We also notice that $f$ is eventually non-negative if and only if $g$ is. 
We first show that $g$ is strongly positive, as we sketched in the introduction. 
If $g$ is eventually non negative, then we may choose an odd $m$ for which $g^m$ has non-negative coefficients. We write $g^m(z) = \sum_{k}b^{(m)}_kz^k $, let $z = re^{i\t} \in \C \setminus \R_{\geq 0}$ and notice that 
\[ |g^m(z)| = \left| \sum_{k} b^{(m)}_k \rho^ke^{ik\t} \right| \leq \sum_{k} b^{(m)}_k \rho^k = g^m(|z|), \]
by the triangle inequality and therefore $|g(z)| \leq g(|z|)$, since $m$ is odd. In fact, we can see that this must be a strict inequality, for if 
\[ |g(z)| = \left| \sum_{k} b_k\rho^ke^{i\t k}  \right| =   \sum_{k} b_k\rho^k  = g(|z|)\] 
we would have $e^{ik_1\t} = e^{ik_2\t} $ for any $k_1,k_2$ in the support of the above sum. Thus $e^{i\t}$ is a $k_1-k_2$ root of unity for any such $k_1,k_2$. This would imply that the support of $g$ is contained in a proper arithmetic progression of $\Z$, implying that $g(z) = z^{k'}h(z^{\ell'})$ for some $h \in \R[z]$ and 
$k' \in \Z_{\geq 0}, \ell' \geq 2$ which is a contradiction. Thus $g$ is strongly positive.

We now show that $g$ has the positive covering property. Assume, without loss of generality, that $g$ fails the one-sided covering property (otherwise replace $g$ with its ``reverse'' $z^dg(1/z)$). Therefore, there exists some minimal $k$ for which $b_k < 0 $ but $k$ is not contained in the $d$-fold sum $S^+(g) + \cdots + S^+(g)$.  Let us write  
\[ k = i_1 + \cdots + i_m ,\]
for some $i_1,\ldots,i_m \in S(g) := \{ i : b_i\not= 0 \}$. We shall show that all but one of $i_1,\ldots, i_m$ are $0$ and one is $k$.
Now, note that we cannot have $i_1,\ldots,i_m \in S^+(g)$, as this would imply that $k \leq d$ can be written as a $d$-fold sum of elements in $S^+(g)$, therefore some $i_j$ must be such that 
$b_{i_j} < 0$. 
Without loss of generality, assume that these terms are exactly $i_1 \geq \cdots \geq i_s$. If $i_1 < k$, by the minimal choice of $k$, for $j \in [s]$ we have, $i_j = r_1^{(j)} +\cdots + r^{(j)}_d$ where $r^{(j)}_1,\ldots,r^{(j)}_d \in S^+(g)$ and therefore 
\[ k = (r^{(1)}_1 + \cdots + r^{(1)}_d) + (r^{(2)}_1 + \cdots + r^{(2)}_d) + \cdots (r^{(s)}_1 + \cdots + r^{(s)}_d)+ i_{s+1} + \cdots +  i_m.\] Note that since each of these terms is non-negative, it must be that $k$ is in the $d$ fold sum 
$S^+(g) + \cdots + S^+(g)$, a contradiction. Hence it must be that $i_1 = k$ and $i_2,\ldots, i_d = 0$. 
Now, if we let $b^{(m)}_k$ be as above, we see
\[ b^{(m)}_k = \sum_{i_1 + \cdots + i_m = k} b_{i_1} \cdots b_{i_m} = m (b_0^{m-1}b_k) < 0, \]
for all $m$, which contradicts the assumption that $f$ is eventually non-negative.
\end{proof}

\vspace{4mm}

The proof of Theorem~\ref{thm:main} now only requires us to evoke our lemmas.

\begin{proof}[Proof of Theorem~\ref{thm:main}] 
	We write $f(z) = z^kg(z^{\ell})$ and assume that $g$ cannot be expressed in this form. 
	
	Now $f$ is eventually non-negative if and only if $g$ is. By Lemma~\ref{lem:easy-direction}, we see that if $g$ is eventually non-negative, then $g$ is strongly positive and satisfies the positive covering property.
	
	To see the converse, assume that $d = \deg(g)$ and apply Lemma~\ref{lem:first-delta} to  $g(z)$ and $z^dg(1/z)$
	to obtain a $\delta = \delta(f)>0$ and to learn that for all 
	$n \in [0,\delta m] \cup [(1-\delta)dm,dm]$ we have $[z^n]f \geq 0$, for $m$ sufficiently large. We then finish by applying Lemma~\ref{lem:middle-coeffs}.
\end{proof}


\section*{Acknowledgments} We would like to thank the anonymous referees for their thorough reading of the paper. 

\bibliographystyle{amsplain}


\begin{dajauthors}
\begin{authorinfo}[marcus]
  Marcus Michelen\\
  University of Illinois at Chicago\\
  Chicago, Illinois, United States of America\\
  michelen.math\imageat{}gmail\imagedot{}com \\
  \url{https://marcusmichelen.org/}
\end{authorinfo}
\begin{authorinfo}[julian]
  Julian Sahasrabudhe\\
  University of Cambridge \\
  Cambridge, United Kingdom \\
  jdrs2\imageat{}cam\imagedot{}ac\imagedot{}uk \\
  \url{https://www.dpmms.cam.ac.uk/~jdrs2}
\end{authorinfo}

\end{dajauthors}

\end{document}